\journal{Applied Mathematics and Computation}
\def\Xo{X^0} \def\Xi{X^{\infty}}
\def\Wo{W^0} \def\Wi{W^{\infty}}
\def\No{N^0} \def\Ni{N^{\infty}}
\def\Fo{F^0} \def\Fi{F^{\infty}}
\def\Co{C_k^0} \def\Ci{C_k^{\infty}}
\def\muo{\mu^{0}} \def\mui{\mu^{\infty}}
\def\mo{m^{0}} \def\mi{m^{\infty}}
\def\bb{\beta_0}
\def\sbb{\frac{1}{2}\sigma^2\beta_0^2}
\def\eqn#1{\begin{equation}#1\end{equation}}
\def\Ind#1{I(#1)}
\def\th{\theta}
\def\s{\sigma}
\def\q{\qquad}
\def\d{\mathrm{d}}
\def\gsr{\widetilde{\phi}}
\def\P{{\mathbb P}}   
\def\E{{\mathbb E}}   \def\R{{\mathbb R}}
\def\N{{\mathbb N}}
\def\FF{\mathcal{F}}
\def\Px{\P^x} \def\Ex{\E^x} \def\Pxs{\Px_s} 
\def\Pxo{\Px_0} \def\Exo{\Ex_0} \def\Pxi{\Px_{\infty}} \def\Exi{\Ex_{\infty}}
\def\Pxt{\Px_t} 
\newcommand{\exit}{{\mbox{\, \vspace{3mm}}}\hfill\mbox{$\square$}}
\newtheorem{Thm}{Theorem}
\newtheorem{Lemma}{Lemma}
 \newtheorem{Problem}{Problem}
\begin{document}

\begin{frontmatter}

\title{Quickest drift change detection in L\'evy-type force of mortality model}
\tnotetext[grants]{This work is partially supported by National Science Centre under grants No. 2016/23/B/HS4/00566  (2017-2020) and No. 2016/23/N/HS4/02106 (2017-2019).}

\author[imuwr]{Micha\l \ Krawiec}
\ead{michal.krzysztof.krawiec@gmail.com}

\author[pwr]{Zbigniew Palmowski}
\ead{zbigniew.palmowski@gmail.com}

\author[pwr]{\L ukasz P\l ociniczak}
\ead{lukasz.plociniczak@pwr.edu.pl}

\address[imuwr]{Mathematical Insititute, University of Wroc\l aw, pl. Grunwaldzki 2/4, 50-384 Wroc\l aw, Poland}
\address[pwr]{Faculty of Pure and Applied Mathematics, Wroc\l aw University of Science and Technology, ul. Wyb. Wyspia\'nskiego 27, 50-370 Wroc\l aw, Poland}

\begin{abstract}
In this paper we give solution to the quickest drift change detection problem
for a L\'evy process consisting of both a continuous Gaussian part and a jump component.
We consider here Bayesian framework
with an exponential a priori distribution of the change point
using an optimality criterion based on a probability of false alarm and an expected delay of the detection. Our approach is based on the optimal stopping theory
and solving some boundary value problem.
Paper is supplemented by an extensive numerical analysis related with the construction of the Generalized Shiryaev-Roberts statistics.
In particular, we apply this method (after appropriate calibration) to analyse Polish life tables and to model the force of mortality in this population with a drift changing in time.
\end{abstract}

\begin{keyword}
L\'evy processes $\star$ quickest detection $\star$ longevity $\star$ optimal stopping $\star$ force of mortality $\star$ life tables $\star$ change of measure.
\MSC[2010] 60G-40 \sep 34B-60 \sep 60G51 \sep 62P-05
\end{keyword}
\end{frontmatter}

%\linenumbers
\newpage
\section{Introduction}
Quickest detection problems, often called also \textit{disorder problems}, arise in various fields of applications of mathematics, such as finance, engineering or economy.
All of them concern statistical methods of detection that allow to find some changes of observed system as quickly as possible.
One of the first motivations to study such problems was to distinguish incoming signal from noise. The main method was based
on the drift change detection using Bayesian approach. First model of this kind in continuous time was presented by
Shiryaev \cite{shiryaev1961problem, shiryaev1963optimum}, where Brownian motion with linear drift was considered and the drift changes according to an exponential distribution.
It was reformulated in terms of free boundary problem and solved using optimal stopping methods. Wide description of this problem with the historical background was also given by Shiryaev many years later in \cite{shiryaev2006disorder, shiryaev2010quickest} (see also references therein).
Later, the minimax approach have also been used.
This method is based on the identifying the optimal detection time based on so-called cumulative sums (CUSUM) strategy; see e.g. Beibel \cite{beibel1996note}, Shiryaev \cite{0036-0279-51-4-L15} or Moustakides \cite{moustakides2004optimality} in the Wiener case, or El Karoui et al. \cite{el2015minimax} in the Poisson case.
Many of these quickest detection problems and used methods in various settings were gathered in the book of Poor and Hadjiliadis \cite{poor2009quickest}.

This article has {\it three main goals}. {\it The first} one concerns solving {\it the quickest drift change detection problem for a L\'evy model under the Bayesian set-up}.
A good deal of work on the detection problems has been devoted to the Brownian or diffusion processes, i.e. to the processes with continuous trajectories; see
e.g. Beibel \cite{beibel1994bayes} or Shiryaev \cite{shiryaev1963optimum}, \cite[Chap. 4]{shiryaev2007optimal} and references therein.
In this paper we consider more general L\'evy process instead.
We assume that unobservable moment $\th$ of the drift change follows some exponential distribution with the parameter $\lambda >0$ (conditioned that it is strictly positive)
and it has additional atom at zero with a mass $x>0$. We consider the process
\eqn{\label{eq:X_0_inf}X_t=\left\{\begin{array}{ll} \Xi_t, & t<\th, \\ \Xi_{\th} + \Xo_{t-\th}, & t \geq \th, \end{array} \right.}
where $\Xi_t$ and $\Xo_t$ are both independent L\'evy processes.
We choose the optimality criterion based on both false alarm probability and mean delay time, that is, in this paper, we will find
the optimal detection rule $\tau^*\in \mathcal{T}$ for which the following infimum is attained:
\eqn{V^*(x)=\inf_{\tau\in \mathcal{T}}\left\{\Px(\tau<\th) + c\Ex[(\tau-\th)^+]\nonumber\right\},}
where $\mathcal{T}$ is the family of stopping times with respect to the
natural filtration $\FF_t^X$ of $X$ satisfying the usual conditions.
In recent years, the study of these processes has enjoyed rejuvenation.
Particular cases (apart from the Brownian case mentioned above)
have been already analysed e.g. by Gal’chuk and Rozovskii \cite{gal1971disorder}, Peskir and Shiryaev \cite{peskir2002solving} or Bayraktar et al. \cite{bayraktar2005standard} for the Poisson process, by Gapeev \cite{gapeev2005disorder} for
the compound Poisson process with the exponential jumps or by Dayanik and Sezer \cite{dayanik2006compound}
for more general compound Poisson problem.
This paper is {\it the first paper} dealing (in Bayesian framework) with {\it nontrivial both components}: Gaussian one and the jump one.
The {\it used method} is based on transferring the detection problem into
the following optimal stopping problem
\eqn{V^*(x)=\inf_{\tau\in\mathcal{T}}\Ex\left[1-\pi_{\tau} + c\int_0^{\tau}\pi_sds\right],\nonumber}
for the \textit{a posteriori} probability
$\pi_t=\Px(\th \leq t|\FF_t^X)$, where the superscript $x$ associated with $\P$ and $\E$ indicates the starting position $\pi_0=x$.
In the next step, using the change of measure technique, we can identify the infinitesimal generator of the Markov process $\pi_t$.
Finally, we formulate the boundary value problem and leaning on Frobenius theory
we solve it for the case of nontrivial Gaussian volatility and the exponentially distributed jumps.
We prove that the optimal detection rule is of threshold type for the process $\pi_t$
for some level $A^*$ that we can identify numerically.

Our {\it second main goal} is to apply the solution of
above detection problem to the analysis of {\it the change of drift in force of mortality}.
We proceed as follows. We take logarithm of force of mortality and subtract observed drift that can be calibrated from the historical data.
As in the seminal Lee-Carter model \cite{lee1992modeling}, we assume that this log-mortality model is perturbed by some noise.
In the Lee-Carter model there is a Gaussian perturbation.
In our model this perturbation comes from  0-mean L\'evy process. To this process we apply the above described quickest detection procedure to detect in optimal way the change of drift.
Precisely, we construct a statistical and numerical procedure based on the generalized version of the Shiryaev-Roberts statistics introduced by Shiryaev \cite{shiryaev1961problem, shiryaev1963optimum} and Roberts \cite{roberts1966comparison}, see also
Polunchenko and Tartakovsky \cite{polunchenko2012state}, Shiryaev \cite{shiryaev2002quickest}, Pollak and Tartakovsky \cite{pollak2009optimality} and Moustakides et al. \cite{moustakides2009numerical}.
Our generalization is based on the fact that we do not pass to the limit with the parameter of \emph{a priori} distribution of (exponentially distributed) random drift change moment.
The construction of this statistics is also new. We start from a continuous statistics derived from the solution of the optimal detection problem in continuous time.
Let $\phi_t=\frac{\pi_t}{1-\pi_t}$.
Then we take discrete moments of time $0<t_1<t_2<\ldots<t_N$ and we construct Generalized Shiryaev-Roberts (GSR) statistics by the following recursion:
\eqn{\phi_{t_{n+1}}\approx\gsr_{n+1}:=e^{\lambda+\beta_0x_{n+1}-\psi_{\infty}(\beta_0)}(\gsr_n+\lambda),\quad n=1,2,\ldots\nonumber}
for $x_{n+1}=X_{t_{n+1}}-X_{t_n}$ and $\beta_0$, $\psi_{\infty}(\beta_0)$ given explicitly.
Since the optimal stopping time in our detection problem is the first time when \textit{a posteriori} probability $\pi_t$ exceeds certain threshold $A^*$,
then it is also optimal to stop when GSR statistics $\gsr_n$ exceeds threshold $B^*:=A^*/(1-A^*)$.

{\it The third goal of our paper} is to provide an extensive numerical analysis of the (Polish) life tables.
Of course one can choose any other set of life tables to perform this analysis.
We start from a historical
calibration of our model assuming nontrivial Gaussian component. We also assume that jumps follows double-sided exponential distribution, allowing very quick increase or decrease in the force of mortality. Then we find the optimal threshold $B^*$ and apply
GSR statistics to detect changes in mortality. We show on many figures that presented
algorithm is very efficient. The problem of analysing the drift change in the force of mortality is important for the national and world economy because of observed longevity.
Nowadays, the insurance industry is facing huge challenges related to the improvements of longevity, which has significantly changed during the last decade. More and more capital has to be constituted to face this long-term risk and
new ways to cross-hedge or to transfer part of the
longevity risk to reinsurers or to financial markets need to be created.
Longevity risk is, however, not easy to transfer. To perform accurate longevity projections one has to identify the change of the drift observed in prospective life tables (national or the specific ones used in insurance companies).
To show how important the problem is, one can look at the French prospective life tables that were updated in 2006 after previous update done in 1993.
After this update, French insurers increased their reserves by 8 percentage on average to account
for the longevity phenomenon. Of course one can expect that there will be periods in which mortality is rapidly decreasing (e.g. because of medical discoveries or political changes) as well as periods in which it stays at a stable level. Still, we have to detect the moment in time when this change is really statistically substantial.
We believe that our optimal detection procedure comes in hand here.

The paper is organized as follows. In Section \ref{sec:setting} we describe basic setting of the problem and introduce main definitions and notation.
In this section we also formulate main theoretical results of the paper, which proofs are given in the Appendix (Section \ref{sec:app}). Section \ref{sec:Cal_USR} is devoted to the construction of the \textit{Generalized Shiryaev-Roberts} statistics. Next, in Section \ref{sec:numerical}, we provide extensive numerical analysis based on a real life tables data.
In particular, we explain there how to calibrate our mortality model. We finish our paper with conclusions given in Section \ref{sec:con}.

\section{Model and main result}\label{sec:setting}
Let $(\Omega,\FF,\Px)$ be a probability space on which we define random variable $\th$ being unobservable moment of drift change in our quickest detection model
and a process $X_t$, independent of $\th$, being the main process of our interest.
Both of these quantities we specify below.
We assume that this random drift change time $\th$ has an atom at $0$ with the probability $x$, that is,
\eqn{\label{eq:defpi}\Px(\th=0)=x\in[0,1]}
and we assume that, conditioned that $\th$ is positive, it has the exponential distribution with parameter $\lambda >0$:
\eqn{\Px(\th>t|\th>0)=e^{-\lambda t}.\nonumber}
On $(\Omega,\FF,\Px)$ we also introduce the process $X=(X_t)_{t\geq 0}$ of a random perturbation as follows:
\eqn{\label{eq:X}X_t=\s W_t + r(t-\th)^+ + \sum_{k=1}^{N_t}C_k(t) - \nu_t,}
where
\begin{itemize}
\item
\eqn{\label{sigma}\s>0}
and $W_t$ is a standard Brownian motion;
\item $r\in\R\backslash\{0\}$ is an additional linear drift rate, which comes after random time $\th$;
\item $N_t$ is a counting process consisting of two Poisson processes switching at time $\th$, i.e.
\eqn{N_t=\left\{\begin{array}{ll} \Ni_t, & t<\th, \\ \Ni_{\th} + \No_{t-\th}, & t \geq \th, \end{array} \right.\nonumber}
where $\Ni_t$ and $\No_t$ are independent Poisson processes with intensities $\mui$ and $\muo$, respectively;
\item
\eqn{C_k(t)=\left\{\begin{array}{ll}\Ci, & t<\th, \\ \Co, & t\geq\th, \end{array}\right.\nonumber}
where
$\{\Ci\}_{k\geq 1}$ is a sequence of i.i.d. random variables with distribution $\Fi(y)$ such that $\Ex[\Ci]=\int_{\R}y\d\Fi(y)=\mi<\infty$. Similarly, $\{\Co\}_{k\geq 1}$ is an independent of $\{\Ci\}_{k\geq 1}$ sequence
of i.i.d. random variables with distribution $\Fo(y)$ and mean $\Ex[\Co]=\int_{\R}y\d\Fo(y)=\mo<\infty$;
\item $\nu_t$ is a compensator of the jump process:
$$\nu_t=\mui \mi\int_0^t\Ind{\th\geq s}\d s+\muo \mo\int_0^t\Ind{\th<s}\d s.$$
\end{itemize}

In other words, process $X_t$ is given in \eqref{eq:X_0_inf}
for
\eqn{\label{eq:X_inf}\Xi_t=\s \Wi_t+\sum_{k=1}^{\Ni_t}\Ci-\mui \mi t}
and
\eqn{\label{eq:X_0}\Xo_t=\s \Wo_t+rt+\sum_{k=1}^{\No_t}\Co-\muo \mo t,}
where $\Wi_t$ and $\Wo_t$ are two independent copies of the standard Brownian motion. Hence the original Brownian motion is the sum $W_t=\Wi_{t \wedge \th}+\Wo_{(t-\th)^+}$.

Let $\{\mathcal{F}_t^{X}\}_{\{t\geq 0\}}$ be the natural filtration of $X$
satisfying the usual conditions such that $\forall_{t\geq 0}\; \mathcal{F}_t^X\subset\mathcal{F}$.
In the problem of the quickest detection we are looking for an optimal $\mathcal{F}_t^{X}$-stopping time $\tau^*$
that minimizes certain optimality criterion.
This criterion incorporates both the probability of false alarm $\Px(\tau<\th)$ and the mean delay time $\Ex[(\tau-\th)^+]$.
The superscript $x$ indicates the mass at zero of $\th$ defined in \eqref{eq:defpi}. Hence our problem can be stated as follows:
\begin{Problem}\label{Prob:crit1}
For each $c>0$ calculate the optimal value function
\eqn{\label{eq:crit1}V^*(x)=\inf_{\tau}\{\Px(\tau<\th) + c\Ex[(\tau-\th)^+]\}}
and find the optimal stopping time $\tau^*$ for which above infimum is attained.
\end{Problem}

The key role in solving this problem plays \textit{a posteriori} probability defined as follows:
\eqn{\label{def:pit}\pi_t:=\Px\left(\th \leq t|\FF_t^X\right).}
Note that $\pi_0=x$.
%Hence now $\Px(\cdot)=\P(\cdot|\pi_0=x)$ and $\Ex$ is the expectation with respect to $\Px$.
Using this \textit{a posteriori} probability, one can reformulate criterion (\ref{eq:crit1}) in the equivalent form.
\begin{Lemma}\label{lem:crit}
Criterion given in (\ref{eq:crit1}) is equivalent to:
\eqn{\label{eq:crit2}V^*(x)=\inf_{\tau}\Ex\left[1-\pi_{\tau} + c\int_0^{\tau}\pi_s\d s\right].\nonumber}
\end{Lemma}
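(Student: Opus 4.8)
This is a classical reformulation in quickest detection, and the plan is to transform the two terms in \eqref{eq:crit1} separately into expressions built from the \emph{a posteriori} probability $\pi_t$, and then take $\inf_{\tau\in\TT}$ of the resulting identity. Both manipulations rest only on the tower property and Fubini/Tonelli, the single nontrivial ingredient being an optional-sampling identity for $\pi$.

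For the false-alarm term, note first that $\th<\i$ $\Px$-a.s., so $\{\tau<\th\}=\{\th\leq\tau\}^c$ and the tower property gives $\Px(\tau<\th)=\Ex\big[1-\Px(\th\leq\tau\mid\FF_\tau^X)\big]$. The crux is the identity $\Px(\th\leq\tau\mid\FF_\tau^X)=\pi_\tau$, where the right-hand side denotes the value at the stopping time $\tau$ of the right-continuous modification of $(\pi_t)$ (which exists since $\FF^X$ satisfies the usual conditions); this is precisely the statement that $(\pi_t)$ is the optional projection of the bounded increasing process $t\mapsto\Ind{\th\leq t}$. I would prove it by first checking it for stopping times with countably many values — where it reduces to the defining property of $\pi_t$ at deterministic times together with the $\FF_\tau^X$-measurability of the level sets $\{\tau=t\}$ — and then passing to the limit along a dyadic approximation $\tau_n\downarrow\tau$, using right-continuity of both $\pi$ and the filtration; alternatively one invokes the optional section theorem. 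Granting this, $\Px(\tau<\th)=\Ex[1-\pi_\tau]$.

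For the detection-delay term, I would write $(\tau-\th)^+=\int_0^\i\Ind{\th\leq s<\tau}\,\d s$ and apply Tonelli (all integrands are nonnegative) to obtain $\Ex[(\tau-\th)^+]=\int_0^\i\Ex\big[\Ind{s<\tau}\Ind{\th\leq s}\big]\,\d s$. Since $\tau$ is an $\FF_t^X$-stopping time, $\{s<\tau\}=\{\tau\leq s\}^c\in\FF_s^X$, so conditioning on $\FF_s^X$ and using the very definition $\pi_s=\Px(\th\leq s\mid\FF_s^X)$ yields $\Ex\big[\Ind{s<\tau}\Ind{\th\leq s}\big]=\Ex\big[\Ind{s<\tau}\pi_s\big]$; one further application of Tonelli then gives $\Ex[(\tau-\th)^+]=\int_0^\i\Ex\big[\Ind{s<\tau}\pi_s\big]\,\d s=\Ex\big[\int_0^\tau\pi_s\,\d s\big]$. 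Adding the two identities proves that $\Px(\tau<\th)+c\Ex[(\tau-\th)^+]=\Ex\big[1-\pi_\tau+c\int_0^\tau\pi_s\,\d s\big]$ for every $\tau\in\TT$, whence the two infima coincide. The boundary case $\tau=\i$ causes no trouble: there one takes $\pi_\i:=\lim_{t\to\i}\pi_t$, which exists because $1-\pi_t$ is a nonnegative supermartingale and equals $0$ because $\Ex[1-\pi_t]=(1-x)e^{-\lambda t}\to 0$, so the identity persists on $\{\tau=\i\}$ as well.

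The routine steps are genuinely routine; the one point that requires real care — and which I therefore expect to be the main obstacle — is the optional-sampling identity $\Px(\th\leq\tau\mid\FF_\tau^X)=\pi_\tau$, i.e.\ fixing the right-continuous modification of $\pi$ and justifying the optional-projection property at arbitrary stopping times.
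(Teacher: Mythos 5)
Your proof is correct and rests on the same two ingredients as the paper's (tower property plus Tonelli), but your execution is cleaner and in fact more careful at both points. For the false-alarm term the paper simply writes $\Ex[\Ex[\Ind{\tau<\th}\mid\FF^X_\tau]]=\Ex[1-\pi_\tau]$ without comment; you correctly flag that $\Px(\th\le\tau\mid\FF^X_\tau)=\pi_\tau$ at a general stopping time is an optional-projection identity requiring the right-continuous modification of $\pi$ and an approximation (or optional section) argument — that is the one genuinely delicate step, and it is right to isolate it. For the delay term the paper disintegrates over $\tau$, writing $\Ex[(\tau-\th)^+]=\int_{\R_+}\Ex[(t-\th)^+]\,\Px(\tau\in\d t)$ and later $\int_{\R_+}\Ex[\int_0^t\pi_s\,\d s]\,\Px(\tau\in\d t)=\Ex[\int_0^\tau\pi_s\,\d s]$; as written these steps presume an independence between $\tau$ and the rest of the data that does not hold ($\tau$ is $\FF^X$-measurable and $\th$ is dependent on $X$), so the paper is at best being informal here. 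Your route — rewrite $(\tau-\th)^+=\int_0^\infty\Ind{\th\le s<\tau}\,\d s$, apply Tonelli, use that $\{s<\tau\}\in\FF_s^X$ to replace $\Ind{\th\le s}$ by its conditional expectation $\pi_s$, and apply Tonelli again — is the standard correct argument and avoids the issue entirely. Your remark on $\tau=\infty$ via the supermartingale $1-\pi_t$ is a sensible addition the paper omits.
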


The proof of above equivalence is given in the Appendix. Thus from now on we focus on the following optimization problem.
\begin{Problem}\label{Prob:crit2}
For given $c>0$ find the optimal value function
\eqn{\label{eq:opt_fun}V^*(x)=\inf_{\tau}\Ex\left[1-\pi_{\tau}+c\int_0^{\tau}\pi_s\d s\right]\nonumber}
and the optimal stopping time $\tau^*$ such that
\eqn{\label{eq:opt_time}V^*(x)=\Ex\left[1-\pi_{\tau^*}+c\int_0^{\tau^*}\pi_s\d s\right],\nonumber}
where $\Ex$ means expectation with respect to $\Px$, i.e. given that $\pi_0=x$.
\end{Problem}

This is a Mayer-Lagrange optimal stopping problem that, using general optimal stopping theory, could be transferred into the boundary value problem;
see Peskir and Shiryaev \cite[Chap. VI.22]{peskir2002solving} for details.
We will also follow this idea. To formulate this free-boundary problem we have to introduce additional notations.

Let $\Pxs:={\rm Law}(X|\th=s)$, for $s\in[0,\infty]$, be a family of probability measures on $(\Omega,\FF)$ such that under
the measure $\Pxs$ the drift change of the process $X_t$ is fixed and equal to $s$.
In particular, under $\Pxi$ drift change never occurs and under $\Pxo$ the drift $r\neq 0$ is present right from the beginning.
Observe that under these two measures the process $X_t$ is a L\'evy process. In this paper we assume that measures $\Pxo$ and $\Pxi$ are related by a certain change of measure introduced below.

Let
\eqn{\label{eq:lap-inf}\psi_{\infty}(\beta):=\log\Exi\left[e^{\beta X_1}\right]=\frac{1}{2}\beta^2\s^2-\beta\mui \mi -\int(1-e^{\beta y})\mui \d \Fi(y)}
be a Laplace exponent of $X_t$ under $\Pxi$. We relate $\Pxo$ and $\Pxi$ via the following change of measure:
\eqn{\label{eq:Ldef}\left.\frac{\d\Pxo}{\d\Pxi}\right|_{\FF_t^X}=L_t:=e^{\beta_0 X_t-\psi_{\infty}(\beta_0)t},}
for
\eqn{\label{eq:beta0}\beta_0:=\frac{r+\mui \mi -\muo \mo }{\s^2},}
where we assume that
\eqn{\label{eq:Lfinite}\int_{|y|\geq 1}e^{\beta_0 y}\d\Fi(y)<\infty.\nonumber}

From [Thm. 3.9. in \cite{kyprianou2006introductory}] it follows that the jump distributions of processes $\Xi_t$ and $\Xo_t$ are related with each other in the following way:
\eqn{\label{eq:jumps}
\muo=\mui\int_{\R}e^{\beta_0 y}\d\Fi(y), \q \d\Fo(y)=e^{\beta_0 y}\d\Fi(y) / \int_{\R}e^{\beta_0 y}\d\Fi(y).}
Further, the volatilities of $\Xi_t$ and $\Xo_t$ are the same and equal to $\s$. Finally, we have chosen $\beta_0$ in such a way that the drift of the process $X_t$ under $\Pxo$ is $r$, that is $\Exo X_1=\Ex \Xo_1=r$ [cf. Thm. 3.9. in \cite{kyprianou2006introductory}].

%\section{Main results}\label{sec:main}
Original measure $\Px$ can be represented by combination of measures $\Pxs,s\geq 0$, as follows:
\eqn{\label{eq:P}\begin{split}\Px(\cdot)&=\Px(\cdot|\th=0)\Px(\th=0)+\Px(\cdot|\th>0)\Px(\th>0) \\&= x\Pxo(\cdot)+(1-x)\int_{\R_+}\Pxs(\cdot)\lambda e^{-\lambda s}\d s.\end{split}\nonumber}
Using the Bayes formula (see Shiryaev \cite{shiryaev2006disorder} for details)
we can represent the process $\pi_t$ in the following way:
\eqn{\label{eq:pit}\pi_t=x\left.\frac{\d\Pxo}{\d\Px}\right|_{\FF_t^X}+(1-x)\int_0^t \left.\frac{\d\Pxs}{\d\Px}\right|_{\FF_s^X}\lambda e^{-\lambda s}\d s.}
Moreover, we have:
\eqn{\label{eq:1pit}1-\pi_t=(1-x)e^{-\lambda t}\left.\frac{\d\Pxt}{\d\Px}\right|_{\FF_t^X}.}
Further, by
\eqn{\label{eq:phi1}\phi_t:=\frac{\pi_t}{1-\pi_t}}
we denote the likelihood ratio process.
Note that on $\FF_t^X$ we have $L_t=\left.\frac{\d\Pxo}{\d\Pxi}\right|_{\FF_t^X}=\left.\frac{\d\Pxo}{\d\Pxt}\right|_{\FF_t^X}$ (since we consider only events up to time $t$).
Following Shiryaev \cite{shiryaev2006disorder} and using equations (\ref{eq:pit}) and (\ref{eq:1pit}) we get that
\eqn{\label{eq:phi2}\phi_t=e^{\lambda t}L_t\left(\phi_0+\lambda\int_0^t\frac{e^{-\lambda s}}{L_s}\d s\right),}
where $\phi_0=\frac{x}{1-x}$ (for $x$ defined in (\ref{eq:defpi})).
Later we will use the representation (\ref{eq:phi2}) to the construction of the Generalized Shiryaev-Roberts statistics.

The representation (\ref{eq:phi2}) is equivalent to the certain stochastic differential equation for $\phi_t$ given in \eqref{eq:dphi} below,
which allows us to identify via relation (\ref{eq:phi1}) the infinitesimal generator $\mathcal{A}$ of the process $\pi_t$:
\eqn{\label{eq:generator}\begin{split}
\mathcal{A}&f(x)=f'(x)\left(\lambda(1-x)+x(1-x)(\mui-\muo)\right)+f''(x)\sbb x^2(1-x)^2
\\&+\int_{\R}\left[f\left(\frac{xe^{\beta_0 y}}{1+x(e^{\beta_0 y}-1)}\right)-f(x)\right]\cdot\left[(1-x)\mui \d \Fi(y)+x\muo \d \Fo(y)\right]
\end{split}}
acting on $f\in\mathcal{C}^2(\R)$.
Proof of this fact is given in Lemma \ref{lem:generator} in the Appendix.

Following the general theory of optimal stopping and free boundary problems, we can prove the following main result of this paper.
\begin{Thm}\label{thm:stopping}
Consider the following boundary value problem:
\eqn{\label{optimal_system}\begin{split}
\mathcal{A}f(x)=-cx, \quad 0\leq x<A^*,\\
f(x)=1-x, \quad A^*\leq x \leq 1,
\end{split}}
with the boundary conditions:
\eqn{\label{cont_fit}f(A^{*-})=1-A^* \quad (continuous\; fit),}
\eqn{\label{smooth_fit}f'(A^{*-})=-1 \quad (smooth\; fit),}
\eqn{\label{norm_ent}f'(0^+)=0 \quad (normal\; entrance),}
where $\mathcal{A}$ is the generator of process $\pi_t$ given by (\ref{eq:generator}).
Then the optimal value function $V^*(x)$ for Problem \ref{Prob:crit2} (hence also for Problem \ref{Prob:crit1}) solves above system for the unique point $A^*\in[0,1]$ and the optimal stopping rule for Problem \ref{Prob:crit2} (hence also for Problem \ref{Prob:crit1}) is given by:
\eqn{\label{tau_optimal}\tau^*=\inf\{t\geq 0:\pi_t\geq A^*\}.}
\end{Thm}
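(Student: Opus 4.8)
The plan is a guess-and-verify argument grounded in the general theory of optimal stopping for Markov processes; see Peskir and Shiryaev \cite{peskir2002solving}. The structure of the solution is dictated by the following heuristics. Stopping at once is admissible and costs $1-x$, while $V^*(1)=0$ because then the change has surely occurred; hence $V^*(x)\leq 1-x$ with equality at $x=1$. Since the larger the a posteriori probability $\pi_t$ is the less it costs to declare the change, one expects the stopping set $D=\{x:V^*(x)=1-x\}$ to be an upper interval $[A^*,1]$ and the continuation set to be $C=[0,A^*)$. On $C$ the cost process $V^*(\pi_t)+c\int_0^t\pi_s\,\d s$ must be a local $\Px$-martingale until the exit from $C$, so Dynkin's formula applied with the generator $\mathcal{A}$ of \eqref{eq:generator} forces $\mathcal{A}V^*(x)=-cx$ there, while $V^*=1-x$ on $D$; continuous and smooth fit at $A^*$ together with the normal-entrance condition at the degenerate boundary point $0$ complete the free-boundary system \eqref{optimal_system}--\eqref{norm_ent}. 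It remains to exhibit a solution and to verify it.

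For the construction, fix a trial level $A\in(0,1]$ and solve $\mathcal{A}f=-cx$ on $[0,A)$ with the value-matching condition $f(A^-)=1-A$ and the normal-entrance condition $f'(0^+)=0$. The point $0$ is an entrance-not-exit boundary for $\pi_t$: the diffusion coefficient $\sbb\,x^2(1-x)^2$ vanishes there while the drift coefficient $\lambda(1-x)+x(1-x)(\mui-\muo)$ equals $\lambda>0$ and pushes the process into the interior, so no Dirichlet datum is available at $0$ and $f'(0^+)=0$ is the right condition. For exponentially distributed jumps the integral term of $\mathcal{A}$ can be removed by a standard change of variables, turning $\mathcal{A}f=-cx$ into a linear ODE that is integrated by Frobenius's method; this yields a one-parameter family $f_A$, and I would show that $A\mapsto f_A'(A^-)$ is continuous and strictly monotone, so the smooth-fit equation $f_A'(A^-)=-1$ has a unique root $A^*\in[0,1]$. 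Setting $f:=f_{A^*}$ on $[0,A^*)$ and $f(x):=1-x$ on $[A^*,1]$ produces a candidate that is $C^2$ on each piece and $C^1$ across $A^*$.

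The verification proceeds as follows. One checks that $f(x)\leq 1-x$ on $[0,1]$ (with strict inequality on $[0,A^*)$, confirming that the continuation set is exactly $C$) and that $\mathcal{A}f(x)\geq -cx$ on $[A^*,1]$; the latter, using $f(x)=1-x$ there, is the explicit inequality one obtains after observing that the jump part of $\mathcal{A}(1-x)$ collapses to $-x(1-x)\int_{\R}\frac{e^{\beta_0 y}-1}{1+x(e^{\beta_0 y}-1)}\big[(1-x)\mui\,\d\Fi(y)+x\muo\,\d\Fo(y)\big]$. Granted these facts, It\^o's formula for the integro-differential generator --- whose compensated-jump martingale is integrable thanks to the assumption $\int_{|y|\geq 1}e^{\beta_0 y}\,\d\Fi(y)<\infty$ --- shows that $f(\pi_t)+c\int_0^t\pi_s\,\d s$ is a $\Px$-submartingale, so optional stopping gives $f(x)\leq\Ex[1-\pi_{\tau}+c\int_0^{\tau}\pi_s\,\d s]$ for every $\tau\in\mathcal{T}$ and hence $f\leq V^*$. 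For the time $\tau^*$ of \eqref{tau_optimal} the stopped cost process is a true martingale on $[0,\tau^*)$ (there $\mathcal{A}f=-cx$) and $\pi_{\tau^*}\in D$, so $f(x)=\Ex[1-\pi_{\tau^*}+c\int_0^{\tau^*}\pi_s\,\d s]\geq V^*(x)$; combining, $f=V^*$ and $\tau^*$ is optimal for Problem~\ref{Prob:crit2}, and Lemma~\ref{lem:crit} carries the statement back to Problem~\ref{Prob:crit1}.

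The step I expect to be hardest is the rigorous justification of the smooth-fit condition \eqref{smooth_fit}: it is exactly here that the non-degeneracy $\s>0$ of the Gaussian component is indispensable, forcing $V^*$ to be $C^1$ across the free boundary and excluding a corner there, and it calls for a local analysis of $V^*$ near $A^*$ rather than a mere inspection of the candidate. Closely related in difficulty is establishing the two global inequalities $f\leq 1-x$ on $[0,A^*)$ and $\mathcal{A}f\geq -cx$ on $[A^*,1]$ from structural properties (monotonicity, concavity, sign of the collapsed jump term) of the interior solution $f_{A^*}$, rather than only near the endpoints. Finally, in contrast with the purely diffusive case, $\mathcal{A}f=-cx$ is an integro-differential equation; this is what forces the restriction of the closed-form solution to the exponential-jump setting (where Frobenius's method applies) and what makes the martingale bookkeeping in It\^o's formula delicate, the integrability assumption on $e^{\beta_0 y}$ being exactly what is needed to control it.
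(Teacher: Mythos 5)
Your argument is correct in outline but takes a genuinely different route from the paper. You propose a \emph{verification} (guess-and-verify) proof: for each trial level $A$ solve $\mathcal{A}f=-cx$ on $[0,A)$ with value-matching and normal entrance, select $A^*$ by imposing smooth fit as a pasting criterion, and then show $f=V^*$ and $\tau^*$ is optimal by the usual submartingale/optional-stopping argument combined with the two global inequalities $f\leq 1-x$ on $[0,1]$ and $\mathcal{A}f\geq -cx$ on $D$. The paper instead argues \emph{directly} on $V^*$: it first establishes concavity of $V^*$ by a dynamic-programming discretization over finite time grids and limiting, deduces from concavity and $V^*(1)=0$ that the continuation set must be a lower interval $[0,A^*)$, quotes the general Dirichlet/Poisson theory of Peskir--Shiryaev for the fact that $V^*$ solves the free-boundary PDE with the first-entry time optimal, and proves smooth fit for $V^*$ itself via a change-of-variable (It\^o--Tanaka) formula with local time in the style of Eisenbaum--Kyprianou, where the non-degeneracy $\sigma>0$ ensures the local-time term has unbounded variation. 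So in the paper smooth fit is a \emph{theorem about} $V^*$, whereas in your scheme it is a \emph{selection rule for} the candidate; both are logically sound, but the burden of proof is distributed differently. Your route still owes a rigorous derivation of the two global inequalities (which you correctly flag as the hardest step and which the paper sidesteps entirely by working with $V^*$ directly) and of the claimed monotonicity of $A\mapsto f_A'(A^-)$ needed for uniqueness of $A^*$; the paper instead gets the interval structure and uniqueness from concavity, and relegates existence and uniqueness of the ODE solution to a separate Frobenius-type analysis in Theorem~\ref{thmbvp}. In short, your approach buys a self-contained optimality verification at the cost of two hard comparison inequalities; the paper's buys those inequalities for free from general optimal-stopping theory at the cost of proving concavity and smooth fit for the a priori unknown value function.
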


The proof of Theorem \ref{thm:stopping} is given in the Appendix. Above theorem identifies the optimal moment when one should "raise the alarm" that the drift has changed.
It is the first moment when \textit{a posteriori} probability $\pi_t$
exceeds certain (known) threshold $A^*$. This crucial observation leads to the construction of the Generalized Shiryaev-Roberts
statistics which we will use in the numerical analysis of the force of mortality.

\section{Generalized Shiryaev-Roberts statistics}\label{sec:Cal_USR}
%To apply the optimal stopping rule for our model to data,
%we first need to construct the \textit{Generalized Shiryaev-Roberts (GSR)} statistics.
%\subsection{GSR statistics}\label{sec:USR}
The construction of classical Shiryaev-Roberts (SR) statistics is in detail described and analysed e.g.
by Shiryaev \cite{shiryaev2002quickest}, Pollak and Tartakovsky \cite{pollak2009optimality} and Moustakides et al. \cite{moustakides2009numerical}.
Following  Shiryaev \cite{shiryaev2002quickest} the classical SR statistics can be written as:
\eqn{\label{eq:classical_SR}\psi_t:=\int_0^t\frac{L_t}{L_s}\d s.\nonumber}
Observe that it can be derived from $\phi_t$ given in (\ref{eq:phi2}) in the following way.
Assume that $\pi_0=0$. Then:
\eqn{\label{eq:SR_from_USR}\lim_{\lambda\to 0}\frac{\phi_t}{\lambda}=\int_0^t\frac{L_t}{L_s}\d s=\psi_t.\nonumber}
There is one generalization of SR statistics used by Zhitlukhin and Shiryaev \cite{zhitlukhin2013bayesian}, but this is not the one consider in this paper.
Here, we start the construction of Generalized Shiryaev-Roberts statistics (GSR) from the continuous time
process $\phi_t$.
The procedure is based on the observed data $X_{t_n}$
given in discrete moments of time
$0=t_0<t_1<\ldots<t_n$, where $n$ is some fixed integer number.
Note that by passing under $\Pxi$ it is a homogeneous random walk.
To simplify our considerations, we assume that $t_i-t_{i-1}=1$. Then, recalling (\ref{eq:Ldef}), %under $\Pxi$ %kom MK: nie w Pxi, tylko Px, tak też potem piszemy SDE (z indykatorem momentu zmiany)
 we have:
\eqn{\label{eq:L_discrete}L_n=L_{t_n}=e^{\beta_0 X_n-\psi_{\infty}(\beta_0)n}=\prod_{i=1}^ne^{\beta_0(X_i-X_{i-1})-\psi_{\infty}(\beta_0)}.\nonumber}
We denote consecutive increments by $x_i:=X_i-X_{i-1}$.
Using a discrete analogue of (\ref{eq:phi2}), we define GSR statistics:
\eqn{\label{eq:phi_discrete}\gsr_n:=e^{\lambda n}L_n\left(\gsr_0+\lambda\sum_{i=0}^{n-1}\frac{e^{-\lambda i}}{L_i}\right).}
Note that GSR statistics $\gsr_n$ can be calculated using the following recursive formula:
\eqn{\label{eq:phi_recursive}\gsr_{n+1}=e^{\lambda+\beta_0x_{n+1}-\psi_{\infty}(\beta_0)}(\gsr_n+\lambda),\quad n=1,2,\ldots,}
for given $\gsr_0=\frac{x}{1-x}$. This recursive form is very convenient for further calculations.

From Theorem \ref{thm:stopping} we know that the optimal stopping rule $\tau^*$ in our detection problem is the first time when \textit{a posteriori} probability $\pi_t$ exceeds the optimal threshold
$A^*$. From (\ref{eq:phi1}) it follows that $\tau^*$ is equivalent to the first moment when $\phi_t$ exceeds threshold $B^*:=A^*/(1-A^*)$.
This means that in terms of GSR statistics for {\it the optimal alarm time} we can choose:
\eqn{\label{alarmtime}\tau^*=\min\left\{n\in\N: \gsr_n\geq \frac{A^*}{1-A^*}\right\}.}
Above stopping rule is used in the numerical analysis described in Section \ref{sec:numerical}.

We emphasize that GSR statistics is more appropriate in longevity modelling analysed in this paper than SR one.
Indeed, as we observed in \eqref{eq:SR_from_USR}, SR statistics is equivalent to GSR statistics when the parameter $\lambda$ of the exponential distribution of the moment $\theta$ of the drift change
tends to $0$. The latter case corresponds to passing with mean value of change point $\th$ to $\infty$ and hence  it becomes \textit{conditionally uniform}, see e.g. Shiryaev \cite{shiryaev2002quickest}. Still, in longevity modelling it is more likely that life tables will need to be revised in $20$ years rather than after $100$ years and
therefore keeping dependence on $\lambda>0$ in our statistics seems to be much more appropriate.

\section{Numerical analysis of longevity}\label{sec:numerical}
\subsection{Model of the force of mortality}
In the numerical part of this paper we focus on modelling the drift change of force of mortality.
Detection problem seems to be new in the context of actuarial science and we believe it
can give a new insight into how the mortality of given population is changing in time.
Mortality data is also of the capital importance for policy-making and public planning because of the public pension systems.

The main process of interest in this section is so-called force of mortality $\mu_t$, which is a
hazard rate function of the statistical length of a person at age $\omega$, say, from given population (we assume that population is homogeneous).
For example, if we choose year $1990$ to be our {\it beginning of observation, that is $t=0$}, and we fix $\omega=50$ for life tables of men, then
$\mu_1$ denotes the force of mortality for $50$-year old man in $1991$ and $\mu_{10}$ denotes
the force of mortality of $50$-year old man in $2000$.
In our model we take logarithm of the force of mortality
and we separate a deterministic part from a stochastic one. Precisely, we have
\eqn{\label{log_mu}\log\mu_t=\log\bar{\mu}_t+X_t,}
where $\bar{\mu}_t$ is the deterministic (average drift) part and $X_t$ is a process of random perturbation.
We assume that this random perturbation is given
by the process \eqref{eq:X_0_inf} with two L\'evy processes $\Xi_t$ and $\Xo_t$
given in \eqref{eq:X_inf} and \eqref{eq:X_0}, respectively,
glued together at time $\theta$ when the drift
of $X_t$ changes from zero to the nonzero one.
Precisely, we calibrate model in such a way, that at time zero (under $\Pxi$)
this random perturbation has mean $0$, that is $\Exi [X_t]=0$.
It is worth to mention here that this model is very similar to the Lee-Carter model (for fixed age $\omega$, cf. \cite{lee1992modeling}):
\eqn{\label{lee_carter}\log\mu_{\omega,t}=a_{\omega}+b_{\omega} k_t+\epsilon_{\omega,t},\nonumber}
where $a_{\omega}$ is a chosen number, $k_t$ is certain univariate time series and $\epsilon$ is a random error.
However, Lee-Carter method focuses on modelling the deterministic part of the force of mortality,
while detection procedure described in this paper concerns controlling the random perturbation in time, in fact the moment when it substantially changes.

\subsection{Calibration}\label{sec:Cal}
Now we move on to explaining how we calibrate our model.
Let $\{l_{\omega}^{(t)}\}_{\omega\geq 0}$ be some life table that gives the information on how many people at age $\omega$ are alive at time $t$, starting from
some initial new born individuals $l_0^{(t)}$.
From the definition of the force of mortality we know that
$\mu_t=-\frac{\d}{\d \omega}\log l_{\omega}^{(t)}$ which for integer valued years $i=0,1,\ldots,n$
takes the following form:
\eqn{\label{lifetablesmu}\mu_i:=\log l_{\omega}^{(i)}-\log l_{\omega+1}^{(i)}=\log \frac{l_{\omega}^{(i)}}{l_{\omega+1}^{(i)}}.}
Note that in our convention $\mu_i$ means force of mortality for fixed age $\omega$ where parameter $i$ runs through consecutive years of constructing of life tables. Similarly,  $\{l_{\omega}^{(i)}\}$ is a life table for a fixed year $i$ and runs through consecutive ages $\omega$. We start from estimating the deterministic part $\log\bar{\mu}_t$ of \eqref{log_mu}. We assume that it has the following form:
\eqn{\label{log_mu_bar}\log\bar{\mu}_t=a_0+a_1t,\nonumber}
where $a_0=\log\mu_0$ is an initial value and $a_1$ is a drift per one unit of time (i.e. one year).
We denote log-increments of $\mu_i$ by $y_i$, that is,
\eqn{\label{logincrements}y_i:=\log\mu_i-\log\mu_{i-1},\quad i=1,\ldots,n.\nonumber}
Then MLE estimator produces:
\eqn{\label{hist_drift}a_1:=\frac{1}{n}\sum_{i=1}^ny_i.\nonumber}

More attention needs to be paid for calibration of the perturbation process $X_t$; see \eqref{eq:X_0_inf} and \eqref{log_mu}.
Note that sample data we have is used only for the calibration (not detecting drift change yet).
Thus at the beginning our perturbation process $X_t$ is in fact equal to $\Xi_t$ given by (\ref{eq:X_inf}). Recall that
\eqn{\Xi_t=\s \Wi_t+\sum_{k=1}^{\Ni_t}\Ci-\mui \mi t\nonumber}
and that we choose the parameters of the above process in such a way that $\Ex \Xi_t=0$ for any $t\geq 0$.

To identify the optimal threshold $A^*$ for the GSR statistics we assume additionally that the distribution of jump sizes $\Ci$ has a double-sided exponential density:
\eqn{\label{eq:f1}\d \Fi(y):=\left(p_1\frac{1}{w}e^{-y/w}\Ind{y\geq 0}+p_2\frac{1}{w}e^{y/w}\Ind{y<0}\right)\d y,}
for some constants $p_1,p_2\in[0,1]$ such that $p_1+p_2=1$ and $w\in\R^+$. We have then
\eqn{\label{eq:mo}\mi=\int_{\R}y\d \Fi(y)=(p_1-p_2)w.\nonumber}
In Figure \ref{fig:simX} there is an exemplary simulation of the trajectory of the process $\Xi_t$.
\begin{figure}[H]
\includegraphics[width=0.8\textwidth]{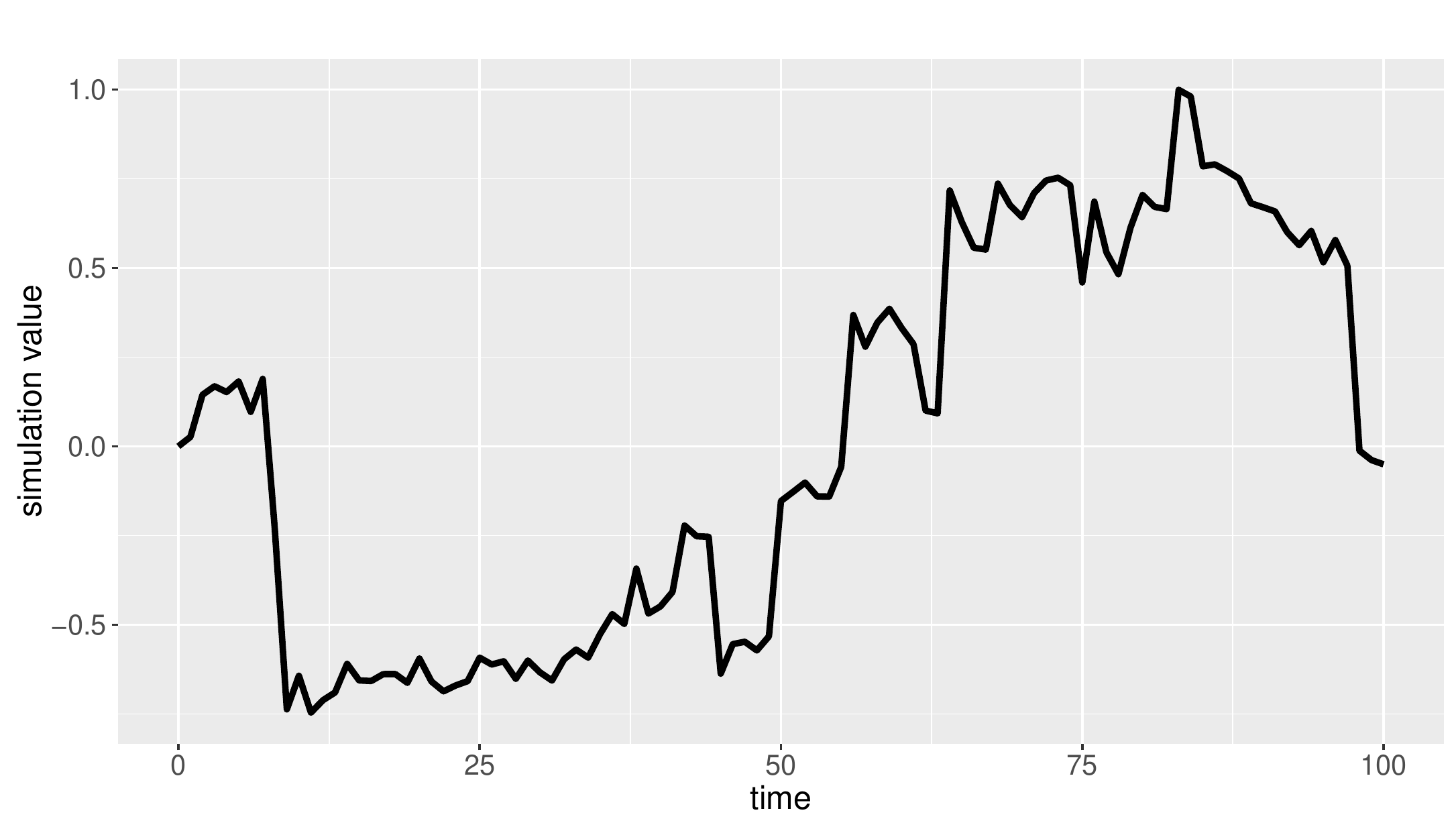}
\centering
\caption{Simulation of process $\Xi_t$.}
\label{fig:simX}
\end{figure}
From  the relation (\ref{eq:jumps}) it follows that the distribution density $\d \Fo(y)$ is also a double-sided exponential distribution:
\eqn{\label{eq:f2}\d \Fo(y)=\left(q_1\frac{1-\bb w}{w}e^{-y\frac{1-\bb w}{w}}\Ind{y\geq 0}+q_2\frac{1+\bb w}{w}e^{y\frac{1+\bb w}{w}}\Ind{y<0}\right)\d y,}
where
\eqn{\label{eq:q1q2}q_1=\frac{p_1(1+\bb w)}{1+(p_1-p_2)\bb w},\quad q_2=\frac{p_2(1-\bb w)}{1+(p_1-p_2)\bb w}.\nonumber}

We have to calibrate the volatility $\s$ of the Brownian component of the process $\Xi_t$,
the intensity $\mui$ of the Poisson process $\Ni_t$,
the mean absolute size of jump $w$ and the probability of the jumps being positive $p_1$ ($p_2$ is then given by $1-p_1$).
We denote
\eqn{x_i:=(\log\mu_i-\log\bar{\mu}_i) - (\log\mu_{i-1}-\log\bar{\mu}_{i-1}),\quad i=1,\ldots,n\nonumber}
and we proceed as follows:
\begin{enumerate}
\item\label{item:sd} we calculate $\s$ as a standard deviation of the sample $x_i$, that is $\sigma^2=\frac{1}{n-1}\sum_{i=1}^n (x_i-\overline{x})^2$ for $\overline{x}=\frac{1}{n}\sum_{i=1}^nx_i$;
\item we
check which observations from the sample $x_i$ have the absolute values greater than $z_\alpha\cdot\s$ for the $1-\alpha$-quantile of the standard Gaussian distribution,
that is, we identify which observation are outside the $(1-\alpha)$-confidence interval, later we choose a significance level of $\alpha=0.1$ and hence
$z_\alpha=1,645$;
\item we remove such observations from the sample (assuming there is at least 1 of them), treating them as jumps of the process $\Xi_t$
and we calculate $\s$ again using the first step, we mark the rejected $x_i$ increments by the subsequence $i_k$;
\item having $\s$ identified, we estimate the parameters of double exponential distribution \eqref{eq:f1} of jumps
from rejected sample $\tilde{x}_k:=x_{i_k}$ modelling jumps as follows:
$w$ is given by sample mean of absolute values of $\tilde{x}_k$,
$p_1$ as the fraction of these observations which are positive, and $\mui$ is the inverse of the sample mean of the distances $\tilde{t}_k$ between jumps
understood in the following way: $\tilde{t}_k =i_{k+1}-i_k$.
\end{enumerate}

Above there are parameters that can be estimated from the sample, but there are still some model parameters that have to chosen a priori.
In particular, we have to declare the incoming drift $r\neq 0$,
the probability $x=\Px(\theta=0)$ that the drift change occurs immediately,
the parameter $\lambda>0$ of the exponential distribution of $\th$ (conditioned that it is strictly positive) and
parameter $c$ present in optimality criterion \eqref{eq:crit1} and responsible for size of penalty for delay in raising the alarm.
These parameters have influence on the sensitivity of the detection procedure and we argue later how we choose them.

\subsection{Solving the boundary value problem}
Having all parameters of the model estimated or a priori chosen, we can apply GSR statistics as long as we manage to identify the optimal threshold $A^*$. To do this we have to solve the boundary value problem (\ref{optimal_system})- \eqref{norm_ent}. Using the same arguments as the ones given in p. 131 of Peskir and Shiryaev \citep{peskir2006optimal}, we obtain that the optimal value function $V^*(x)$ is differentiable inside the continuation set $C=(0,A^*]$, so it is in the domain of the generator $\mathcal{A}$. Here, we can use the theory of singular ordinary differential equations.
\begin{Thm}\label{thmbvp}
Assume that
\[
	\beta_0 ^2 \gamma ^2 \sigma ^2-\beta_0 ^2 \sigma ^2+2 \gamma ^2 \lambda +2 \gamma ^2 \mui -2 \gamma ^2 \muo+2 \gamma  \muo-2 \lambda -2 \mui+4 \muo-4 \gamma \muo q_2 > 0,
\]
where $\gamma:=1/(\bb w)$. The solution of the boundary value problem (\ref{optimal_system}) - \eqref{norm_ent}
is given by
\eqn{\label{v*}
	V^*(x) = 1 - A^* - \int_x^{A^*} \frac{u(z)}{1-z} \d z,
}
where $A^*$ is such that $u(A^*) = A^* - 1$. The function $u(x)$ is a solution of
\eqn{\label{eq:main4}
	\sum_{k=0}^{3} (1-x)^{k+1} b_k(x) u^{(k)}(x) = c(\gamma^2-1)x(1-x),
}
for
\[
\begin{split}
	b_k(x) &= \frac{1}{k!}\sum_{i=k}^{3} i! \; a_i(x), \\
	a_0(x) &= -\lambda\gamma^2+x\Big(\muo(x(2\gamma q_2-\gamma+1)+\gamma^2-1)\\
	&\q\q\q\q\q\q\q+\mui(x(\gamma-2\gamma p_2)+2\gamma p_2-\gamma^2-\gamma)\Big), \\
	a_1(x) &= x(\lambda-2\lambda x)+x^2\Big(\muo(3x-3)+\mui(2-3x)\\
	&\q\q\q\q\q\q\q-\sbb(\gamma^2-12x^2+15x-4)\Big), \\
	a_2(x) &= x^2 \lambda-x^3 \left(\muo-\mui+(8x-5)\sbb)\right), \\
	a_3(x) &= \sbb x^4,
\end{split}
\]
supplemented by the initial conditions
\eqn{
	\begin{split}
	&u(0) = 0, \quad u'(0) = -\frac{c}{\lambda} \\
	&u''(0) = -\frac{c \left(-\beta_0 ^2 \gamma ^2 \sigma ^2+4 \beta_0 ^2 \sigma ^2+2 \gamma ^2 \lambda -2 \gamma ^2 \mui+2 \gamma ^2 \muo-2 \gamma  \mui-8 \lambda +4 \mui-8 \muo+4 \gamma  \mui p_2\right)}{\left(\gamma ^2-4\right) \lambda ^2}.
	\end{split}
\label{eq:initial}
}
Moreover, $u(x)$ can be represented by the following \emph{asymptotic} series
\eqn{\label{solbvp}
	u(x)=\sum_{k=1}^\infty \alpha_ix^i \quad \text{as} \quad x\rightarrow 0^+,
}
with the coefficients $\{\alpha_i\}_{\{i=0\}}^\infty$ that could be derived from the formula (\ref{eq:main4}).
\end{Thm}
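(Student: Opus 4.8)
The plan is to work inside the continuation region $C=(0,A^*)$, where (as recalled just before the statement) $V^*$ is twice differentiable and lies in the domain of $\mathcal{A}$, and to turn the integro-differential equation $\mathcal{A}V^*=-cx$ from \eqref{optimal_system} into the ordinary differential equation \eqref{eq:main4}. The first device is the change of unknown $u(x):=(1-x)(V^*)'(x)$. Integrating $(V^*)'(z)=u(z)/(1-z)$ over $[x,A^*]$ and inserting the continuous-fit value $V^*(A^{*-})=1-A^*$ from \eqref{cont_fit} gives at once the representation \eqref{v*}. The smooth-fit condition \eqref{smooth_fit}, $(V^*)'(A^{*-})=-1$, is equivalent to $(1-A^*)\big((V^*)'(A^{*-})+1\big)=0$, i.e. to $u(A^*)=A^*-1$, which is exactly the relation defining $A^*$ in the statement; and normal entrance \eqref{norm_ent} reads $u(0)=0$. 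Hence the whole theorem reduces to showing that $u$ solves \eqref{eq:main4} with the initial data \eqref{eq:initial} and admits the asymptotic expansion \eqref{solbvp}.

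The heart of the argument is the elimination of the jump integral in \eqref{eq:generator}. Using the Esscher-type relation \eqref{eq:jumps}, $\muo\,\d\Fo(y)=\mui e^{\bb y}\d\Fi(y)$, the bracket $(1-x)\mui\d\Fi(y)+x\muo\d\Fo(y)$ becomes $\mui\big(1-x+xe^{\bb y}\big)\d\Fi(y)$, and $1-x+xe^{\bb y}$ is precisely the denominator in the argument $\eta(x,y):=xe^{\bb y}/(1-x+xe^{\bb y})$ of $f$. I would then substitute $\eta=\eta(x,y)$, for which $e^{\bb y}=\eta(1-x)/(x(1-\eta))$, $1-x+xe^{\bb y}=(1-x)/(1-\eta)$ and $\bb\,\d y=\d\eta/(\eta(1-\eta))$, with $y\geq0$ corresponding to $\eta\geq x$ and $y<0$ to $\eta<x$ (taking $\bb>0$; the opposite sign is analogous). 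Under the double-exponential density \eqref{eq:f1} this turns the integral term into two Volterra integrals — one over $(x,1)$ whose kernel is a fixed power combination of $x,\,1-x,\,\eta,\,1-\eta$ with exponents built from $\gamma=1/(\bb w)$, the other over $(0,x)$ with the conjugate exponents — and each of these two operators, after multiplication by a suitable power of $x$ and $1-x$, satisfies a first-order linear ODE with polynomial coefficients (the $\eta$-kernel being an exact $\eta$-derivative, and the $x$-dependent prefactor reproducing the integral itself upon differentiation). Two successive differentiations of the equation, combined with these two first-order ODEs, then eliminate both integrals and raise the order of $V^*$ from $2$ to $4$, hence to order $3$ in $u$; collecting the coefficients of $u,u',u'',u'''$ and dividing through by powers of $1-x$ is the mechanical computation that reproduces $a_0,\dots,a_3$, the $b_k$ and the right-hand side $c(\gamma^2-1)x(1-x)$ of \eqref{eq:main4}. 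The factor $\gamma^2-1=(\gamma-1)(\gamma+1)$ is the product of the characteristic exponents of the two Volterra kernels, $\sbb$ is inherited from the diffusion coefficient of \eqref{eq:generator} and carried into the top coefficient $a_3(x)=\sbb x^4$, and the degeneracy of the $a_k$ at $x=0$ and $x=1$ reflects the degeneracy of $\mathcal{A}$ at the absorbing boundary points $0$ and $1$.

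For the initial data \eqref{eq:initial} I would evaluate $\mathcal{A}V^*=-cx$ and its first two $x$-derivatives at $x=0$. At $x=0$ the diffusion coefficient $\sbb x^2(1-x)^2$ and the jump integral both vanish — indeed $\eta(0,y)=0$, and near the singular endpoint $\eta=x$ one has $V^*(\eta)-V^*(x)=O(\eta-x)$ while $(V^*)'(0)=0$, so the integral is $O(x^2)$ — while the drift coefficient equals $\lambda$; thus the equation at $0$ reads $\lambda(V^*)'(0)=0$, consistent with $u(0)=0$. Differentiating once and using $(V^*)''(0)=u'(0)$ gives $\lambda u'(0)=-c$, i.e. $u'(0)=-c/\lambda$. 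Differentiating twice picks up the leading $O(x^2)$ coefficient of the jump integral and the first contribution of the diffusion term; since $u''(0)=-2u'(0)+(V^*)'''(0)$, solving the resulting linear relation for $(V^*)'''(0)$ produces the displayed formula for $u''(0)$, the denominator $(\gamma^2-4)\lambda^2$ arising from the small-$x$ expansion of the two Volterra kernels (the exponents $2-\gamma$ and $2+\gamma$ appearing when integrating $\eta^{1\mp\gamma}$ near $\eta=0$ combine into $4-\gamma^2$). The positivity hypothesis of the theorem is what keeps this step and the recursion below non-degenerate: in particular it excludes $\gamma^2=4$ and fixes the sign that selects the branch of the reduced singular ODE attached to the genuine value function.

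Finally, for \eqref{solbvp} one substitutes $u(x)=\sum_{i\geq1}\alpha_i x^i$ (the sum starting at $i=1$ because $u(0)=0$) into \eqref{eq:main4} and matches powers of $x$. Because the top-order coefficient $(1-x)^4 b_3(x)=(1-x)^4\sbb x^4$ vanishes to fourth order, $x=0$ is an irregular singular point, so one obtains only a triangular recursion determining every $\alpha_i$ from $\alpha_1=u'(0)$ and $\alpha_2=u''(0)/2$ and yielding a series that represents $u$ asymptotically as $x\to0^+$; checking that the lowest two relations reproduce \eqref{eq:initial} closes the argument, and the value $A^*$ with $u(A^*)=A^*-1$ is the one already furnished by Theorem \ref{thm:stopping}. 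I expect the main obstacle to be the bookkeeping of the integral-to-ODE reduction — carrying every constant through the change of variables and the two differentiations so as to land exactly on $a_0,\dots,a_3$ — together with verifying that the stated positivity inequality is precisely the condition under which the reduced equation has the claimed structure near the singular point; the purely analytic points (differentiation under the integral sign, the behaviour of the kernels at $\eta=x$ and at $\eta=0,1$) are routine given the $\mathcal{C}^2$-regularity of $V^*$ on $C$ recalled before the statement.
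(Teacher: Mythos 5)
Your reduction of the integro-differential equation to a third-order ODE — the substitution $\eta = xe^{\bb y}/(1-x+xe^{\bb y})$, the two successive differentiations to eliminate the resulting pair of Volterra integrals, and the final change of unknown $u(x)=(1-x)(V^*)'(x)$ to regularize the singularity at $x=1$ — is exactly the computational core of the paper's proof, and your observation that $\muo\,\d\Fo(y)=\mui e^{\bb y}\d\Fi(y)$ collapses the integration kernel to $\mui(1-x+xe^{\bb y})\,\d\Fi(y)$ is a neat simplification that the paper does not spell out. The representation \eqref{v*}, the reformulation of the boundary conditions as $u(0)=0$, $u(A^*)=A^*-1$, and the matching of the power-series coefficients at $x=0$ also agree with the paper.

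The genuine gap is your account of the positivity hypothesis. You claim it "excludes $\gamma^2=4$ and fixes the sign that selects the branch" of the reduced equation, but neither statement holds: evaluated at $\gamma=2$ the inequality reads $3\beta_0^2\sigma^2+6\lambda+6\mui-8\muo q_2>0$, which constrains the remaining parameters but certainly does not forbid $\gamma=2$; the denominator $(\gamma^2-4)\lambda^2$ in $u''(0)$ is a separate nondegeneracy issue. In the paper the hypothesis is used for something else entirely: expanding near the second singular point $x=1$, the solution $y=(V^*)'$ of the third-order ODE must blow up as $y(x)\sim -b/(1-x)$ with
\[
	b=\frac{2c\left(\gamma^2-1\right)}{\beta_0^2\gamma^2\sigma^2-\beta_0^2\sigma^2+2\gamma^2\lambda+2\gamma^2\mui-2\gamma^2\muo+2\gamma\muo-2\lambda-2\mui+4\muo-4\gamma\muo q_2},
\]
and the positivity assumption is precisely what makes $b>0$, hence $y(x)\to-\infty$ as $x\to 1^-$. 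Combined with $y(0^+)=0$ and the monotonicity of $y$ (concavity of $V^*$, Lemma \ref{lem:concave}), this yields existence and uniqueness of the threshold $A^*$ with $y(A^*)=-1$ directly from the ODE — the constructive characterization of $A^*$ that the theorem is meant to deliver. Falling back on "$A^*$ is furnished by Theorem \ref{thm:stopping}" sidesteps this and leaves the stated hypothesis with no role in your argument. You also do not verify that the integration constant $A=1-A^*-\int_0^{A^*}y(z)\,\d z$ lies in $[0,1]$ (equivalently that $0\le V^*\le 1$); the paper checks this via monotonicity of $f$ for the lower bound and concavity together with the tangency to $1-x$ at $A^*$ for the upper bound, a short step that should not be omitted.
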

The proof of above theorem is given in the Appendix. The numerical procedure of finding $V^*(x)$ can be conducted in three steps. First, numerically solve (\ref{eq:main4}) in order to find $u(x)$. Then, use a root finding algorithm to compute the zero $A^*$ of the function $u(x) + 1 - x$. Finally, calculate $V^*(x)$ by (\ref{v*}).

\subsection{Drift change detection for Polish life tables}
We apply described drift change detection procedure to the analysis of the Polish life tables.
We consider Polish life tables for years from 1960 to 2014, downloaded from The Human Mortality Database \cite{mortalityDatabase}.
For fixed age $\omega$ we check how the force of mortality has been changing over these years.
We also detect, using introduced Generalized Shiryaev-Roberts statistics, the significant change of drift in mortality.
At the beginning, in Figure \ref{fig:mort45}, we give the exemplary plot of the force of mortality for Polish men at age $45$.
\begin{figure}[H]
\includegraphics[width=0.95\textwidth]{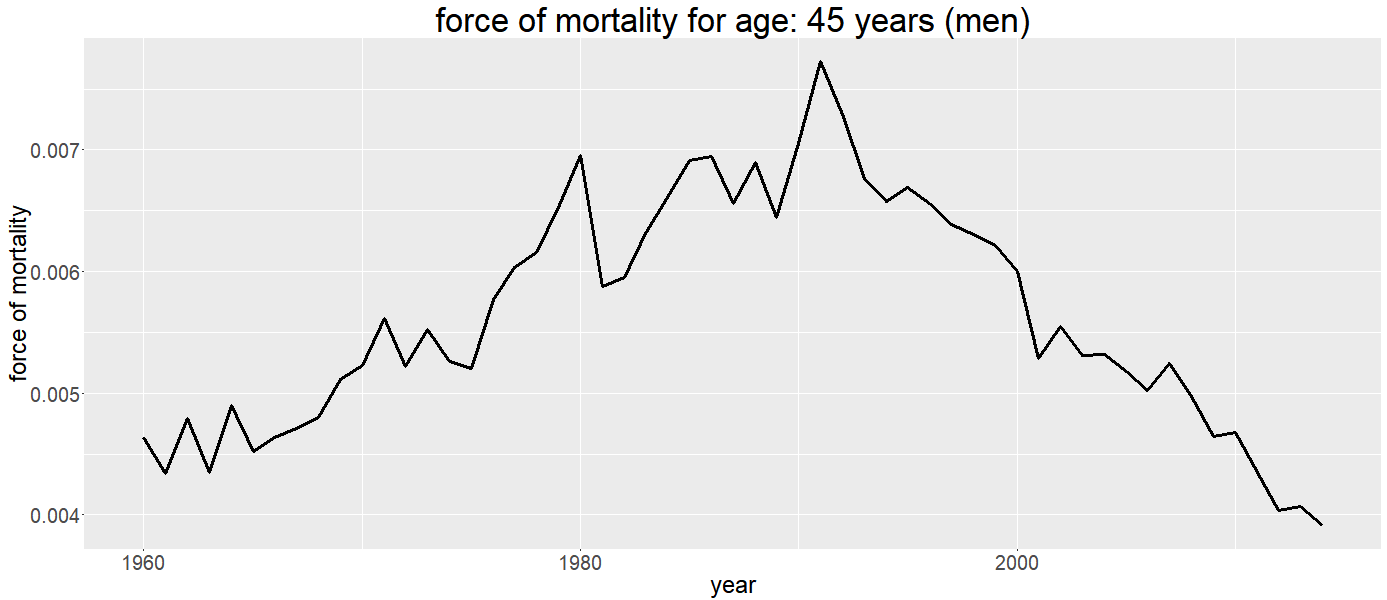}
\centering
\caption{Force of mortality for men in Poland at age 45, years 1960-2014}
\label{fig:mort45}
\end{figure}
Observe that the force of mortality is increasing in the first part of the plot and then decreasing in the second part.
To proceed with detection algorithm, we have to determine which data will be used to calibrate our model. In this example
we use for that purpose first 20 years of observations, i.e. years 1960-1980. Then, from the year 1980 we start to look for the change of drift in mortality.

There are still some parameters that need to be determined arbitrary. We assume that their values are as follows:
\begin{itemize}
\item $\lambda=0.05$ -- the parameter of the exponential distribution of $\theta$ conditioned to be strictly positive;
\item $x=\P^x(\theta=0)=0.05$;
\item $c=0.02$ -- the weight of the mean delay time inside the optimality criterion \eqref{eq:crit1};
\item drift incoming after the change time $\th$ -- we consider two values, dependent on $\s$: $r=-3\sigma$ or $r=-5\sigma$.
\end{itemize}

The result for $r=-3\s$ is shown in Figure \ref{fig:det45}.
\begin{figure}[H]
\includegraphics[width=0.95\textwidth]{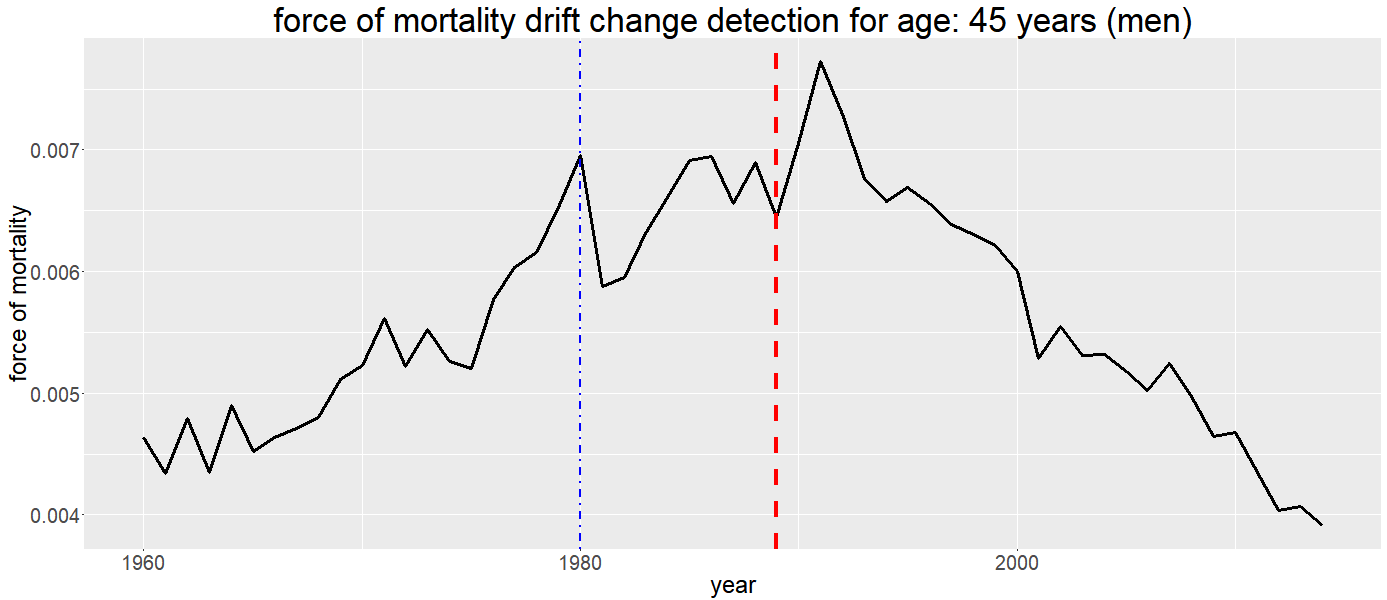}
\centering
\caption{Force of mortality drift change detection for men in Poland at age 45, years 1960-2014}
\label{fig:det45}
\end{figure}
Blue vertical line indicates year $1980$, when the detection algorithm starts. The red vertical line shows the moment of drift change detected by our procedure -- year $1989$. Indeed, we can observe that the drift stabilizes after $1989$. In general, the parameter $r$ determines how sensitive the algorithm is, so greater absolute values of $r$ cause later detections.

This relationship can be observed in Figure \ref{fig:men_all}, which consists of four smaller plots.
In the first column there are the same force of mortality plots for men at age $45$, but they differ by the red vertical line indicating moments of detection. The plot in the first row is for parameter $r=-3\s$, while the second plot is for $r=-5\s$. As we can see, in the lower plot detection occurred later.
The second column in Figure \ref{fig:men_all} presents analogous two plots, but for men at age $60$. In the first row there is again plot for parameter $r=-3\s$ and in the second one -- for $r=-5\s$. This time both detection moments are quite close to each other.
\begin{figure}[H]
\includegraphics[width=0.48\textwidth]{mortality_men_45_r3_c002_d005_p005.png}
\includegraphics[width=0.48\textwidth]{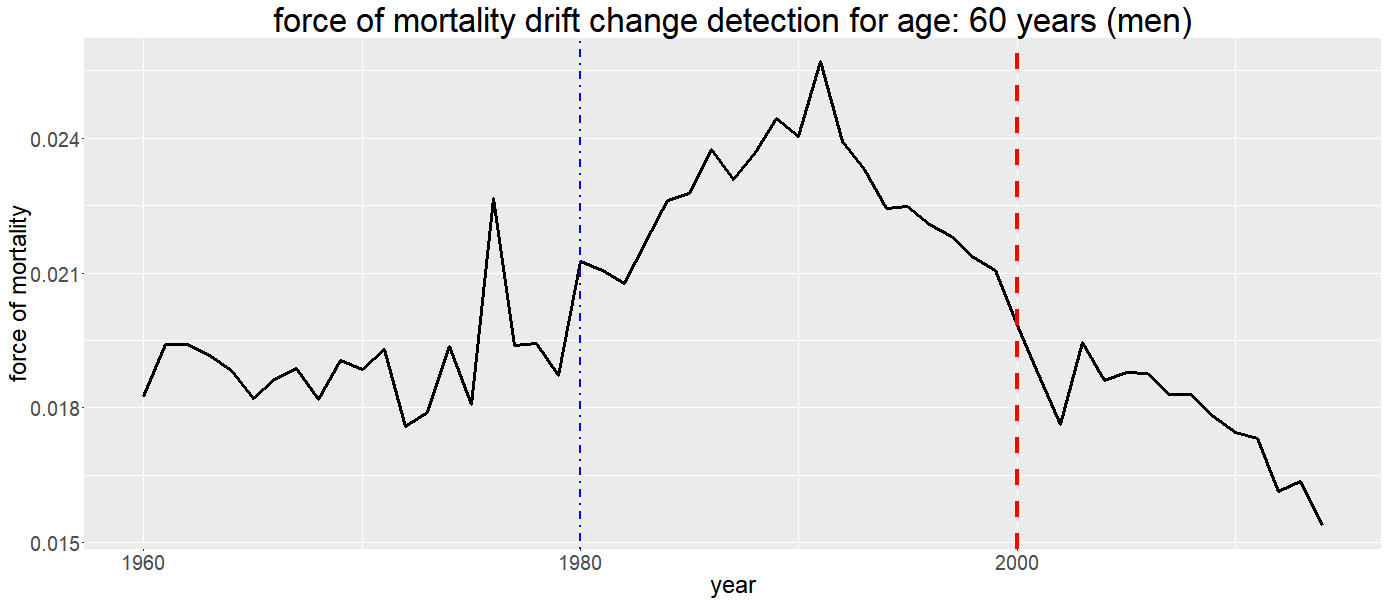}
\includegraphics[width=0.48\textwidth]{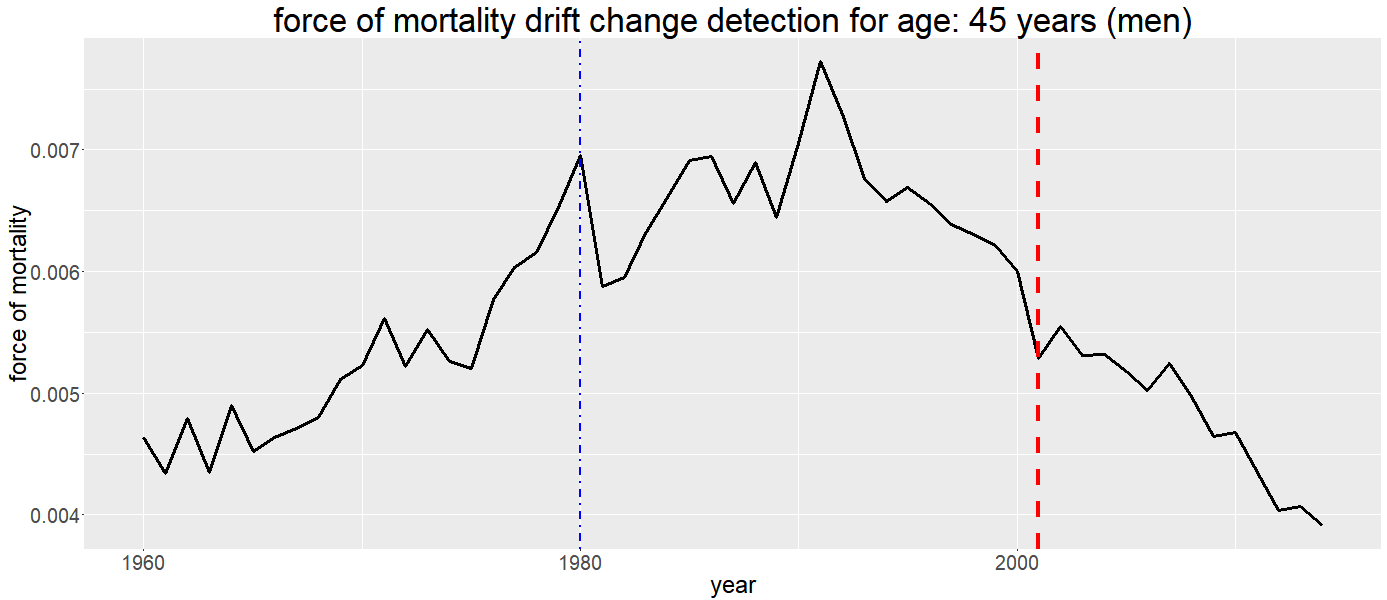}
\includegraphics[width=0.48\textwidth]{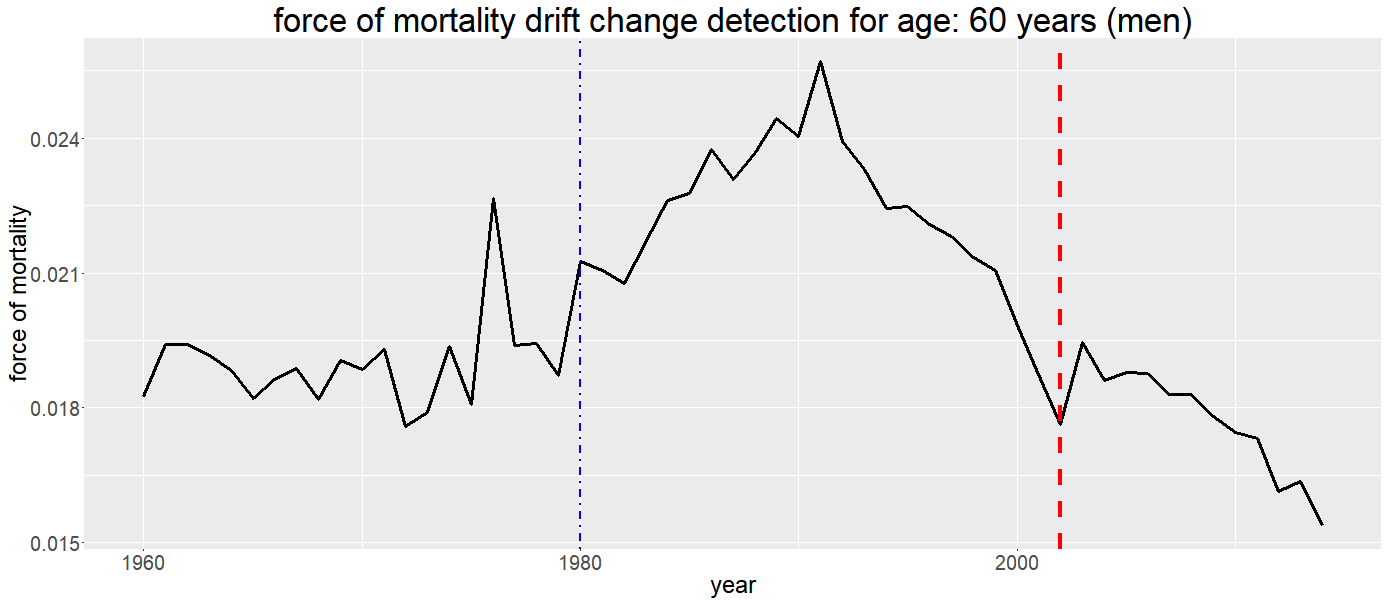}
\centering
\caption{Force of mortality drift change detection for men in Poland at age 45 and 60, $r=-3\sigma$ (first row) and $r=-5\sigma$ (second row), years 1960-2014.}
\label{fig:men_all}
\end{figure}

We can also analyse similar plots for women at age $45$ and $60$. The results are presented in Figure \ref{fig:women_all}.
Note that in both columns, on plots in the second row, the red lines are at the end -- it indicates that the drift change was not detected at all.
This shows that there might be some scenarios where we can expect the drift change (in Figure \ref{fig:women_all}, second row, it is around the year $1990$),
but our algorithm based on GSR statistics treats this change insignificant for chosen new drift $r=-5\s$.
\begin{figure}[H]
\includegraphics[width=0.48\textwidth]{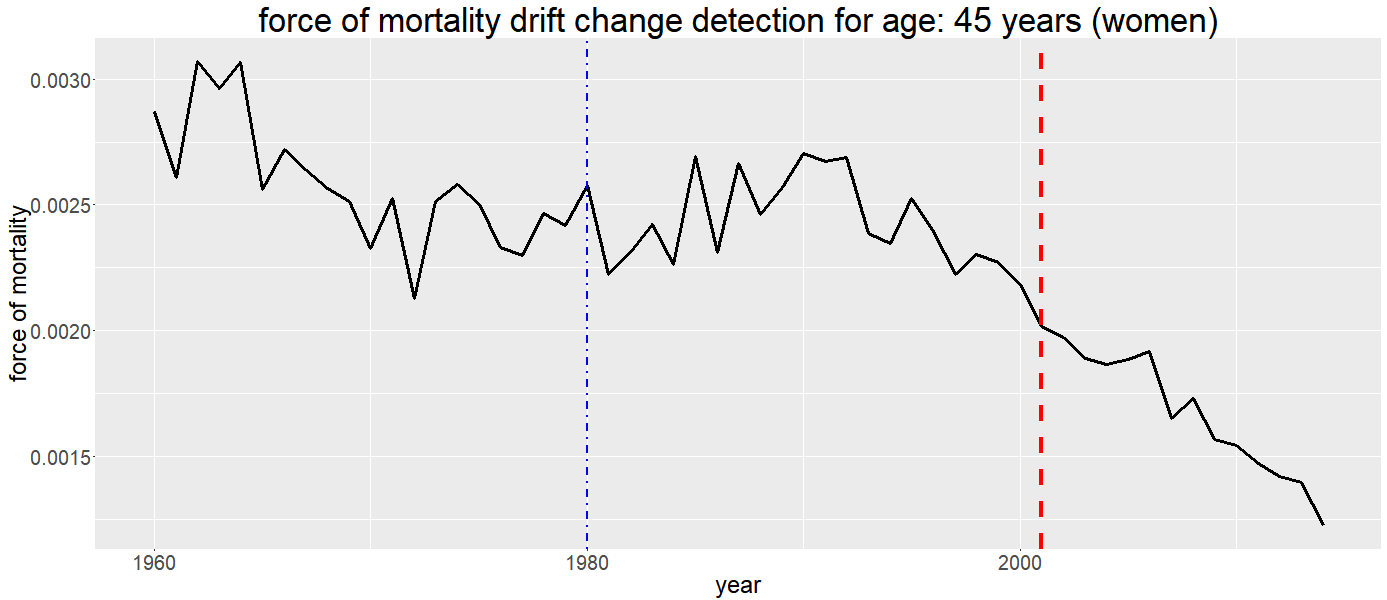}
\includegraphics[width=0.48\textwidth]{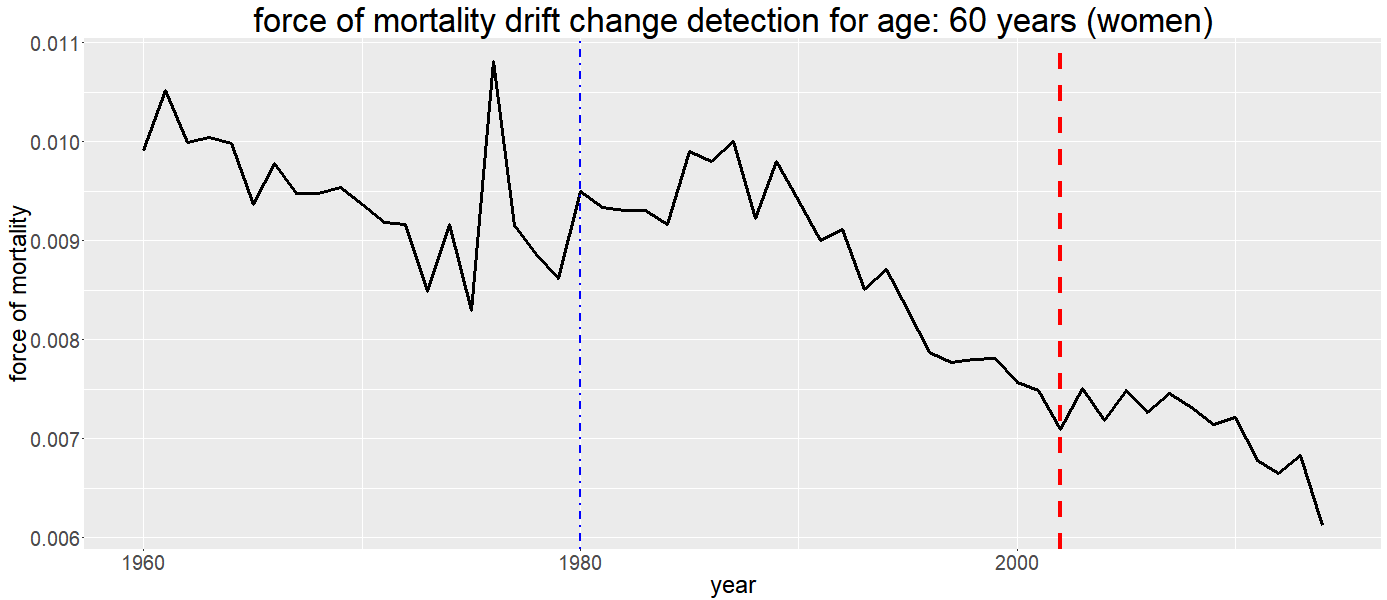}
\includegraphics[width=0.48\textwidth]{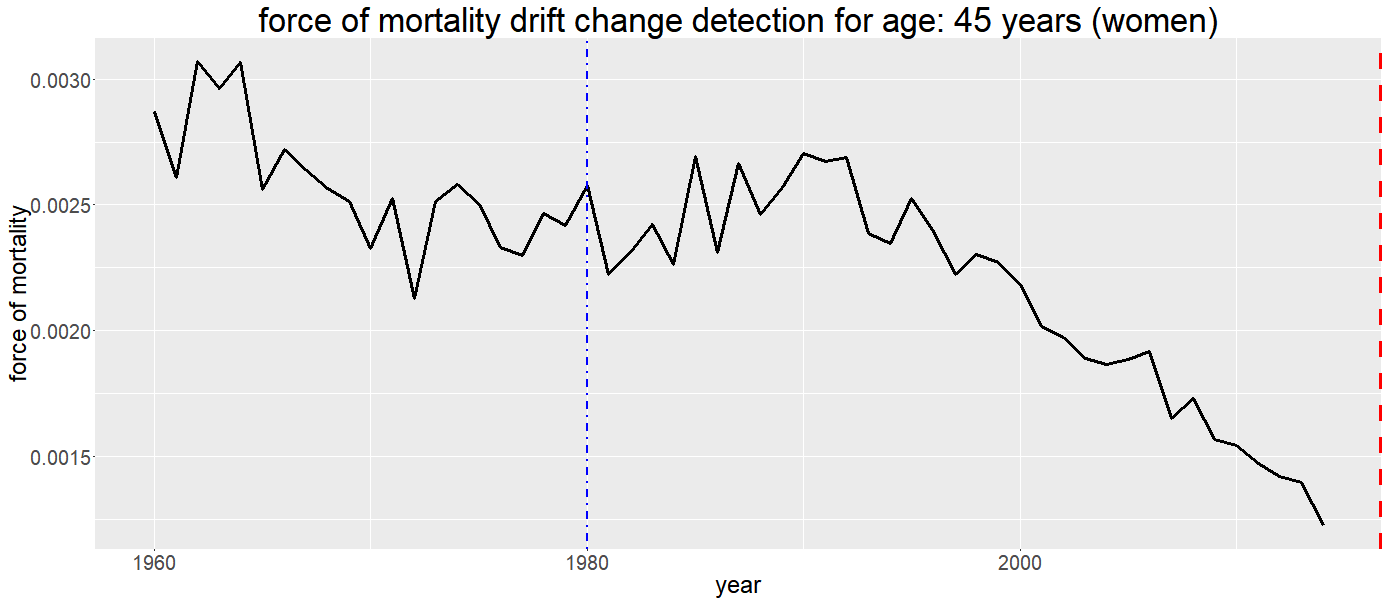}
\includegraphics[width=0.48\textwidth]{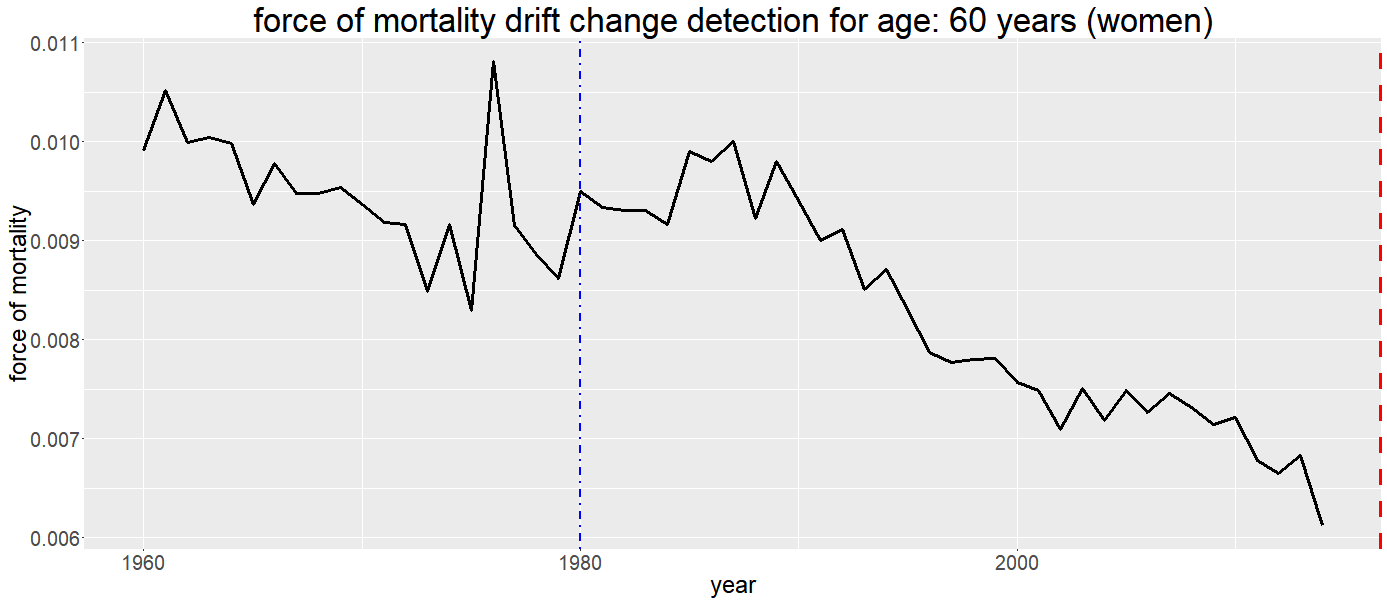}
\centering
\caption{Force of mortality drift change detection for women in Poland at age 45 and 60, $r=-3\sigma$ (first row) and $r=-5\sigma$ (second row), years 1960-2014.}
\label{fig:women_all}
\end{figure}

\section{Conclusions}\label{sec:con}
In this paper we solved the quickest drift change detection problem
for a L\'evy process consisting of both a continuous Gaussian part and a jump component.
We considered here Bayesian framework with an exponential a priori distribution of the change point
using an optimality criterion based on a probability of false alarm and an expected delay of the detection.
Using this solution we constructed the  Generalized Shiryaev-Roberts statistics and applied it
in detecting the change of the force of mortality in the Polish life tables.

It is natural to consider multivariate L\'evy processes
with dependent components for example to model the change of the force mortality of couples.
It is important to analyze this dependence since many papers about health and mortality has consistently identified that unmarried individuals generally
report a higher mortality risk than their married counterparts, with men being particularly affected in this respect.
One can also consider other a priori distribution of the change point. Unfortunately, in this case the optimal
stopping rule is much more complex and more difficult to implement; see \cite{zhitlukhin2013bayesian} in the case of the Brownian set-up.
Finally, one can consider other than \eqref{eq:Ldef} change of measure linking $\Pxo$ and $\Pxi$ as it is suggested in \cite{palmowski2002}.
This will be the subject of future research.

\section{Appendix}\label{sec:app}
%\subsection{Auxiliary lemmas}

\paragraph{Proof of Lemma \ref{lem:crit}}
Note that:
\eqn{\label{crit:p1}\Px(\tau<\th)=\Ex[\Ex[\Ind{\tau<\th}|\FF_{\tau}^X]]=\Ex[1-\Px(\th\leq\tau|\FF_{\tau}^X)]=\Ex[1-\pi_{\tau}].}
Moreover, observe that by  Tonelli's theorem we have:
\begin{equation}\label{crit:p2}\begin{split}
&\Ex[(\tau-\th)^+] = \int_{\R_+}\Ex[(t-\th)^+]\Px(\tau\in \d t) = \int_{\R_+}\Ex\left[\int_0^t\Ind{\th\leq s}\d s\right]\Px(\tau\in \d t) \\
&= \int_{\R_+}\int_0^t\Ex\left[\Ex\left[\Ind{\th\leq s}|\FF_s^X\right]\right]\d s\Px(\tau\in \d t) = \int_{\R_+}\int_0^t\Ex[\pi_s]\d s\Px(\tau\in \d t) \\
&= \int_{\R_+}\Ex\left[\int_0^t\pi_s\d s\right]\Px(\tau\in \d t) = \Ex\left[\int_0^{\tau}\pi_s\d s\right].
\end{split}\end{equation}
Putting together (\ref{crit:p1}) and (\ref{crit:p2}) completes the proof.
\exit

\begin{Lemma}\label{lem:generator}
The generator of the process $\pi_t$ defined in (\ref{def:pit}) is given by:
\eqn{\begin{split}
\mathcal{A}&f(x)=f'(x)\left(\lambda(1-x)+x(1-x)(\mui-\muo)\right)+f''(x)\sbb x^2(1-x)^2
\\&+\int_{\R}\left[f\left(\frac{xe^{\beta_0 y}}{1+x(e^{\beta_0 y}-1)}\right)-f(x)\right]\cdot\left[(1-x)\mui \d \Fi(y)+x\muo \d \Fo(y)\right]
\end{split}}
for $f\in\mathcal{C}^2(\R)$.
\end{Lemma}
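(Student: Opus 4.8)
The plan is to obtain the dynamics of $\pi_t$ from the explicit formula (\ref{eq:phi2}) for the likelihood ratio $\phi_t=\pi_t/(1-\pi_t)$, in the spirit of the classical Brownian treatment of Shiryaev \cite{shiryaev2006disorder}, and then read off the generator via (\ref{eq:phi1}). Writing $\Psi_t:=\phi_0+\lambda\int_0^t e^{-\lambda s}L_s^{-1}\,\d s$, which is continuous and of finite variation, we have $\phi_t=e^{\lambda t}L_t\Psi_t$, so the product rule gives
\[
	\d\phi_t=\lambda e^{\lambda t}L_t\Psi_t\,\d t+e^{\lambda t}\Psi_t\,\d L_t+e^{\lambda t}L_t\,\d\Psi_t=\lambda\,(1+\phi_t)\,\d t+\phi_{t-}\,\frac{\d L_t}{L_{t-}}.
\]
Everything therefore reduces to expanding $\d L_t/L_{t-}$, with $L_t=e^{\beta_0 X_t-\psi_{\infty}(\beta_0)t}$, \emph{under $\Px$ and with respect to the observed filtration $\FF_t^X$}; this is legitimate because $\phi_t$ is $\FF_t^X$-adapted and we seek the generator of $\pi_t$ as an $\FF_t^X$-Markov process.

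The key step — and the part I expect to cost the most work — is the $\FF_t^X$-semimartingale decomposition of $X$ under $\Px$, obtained by a standard innovations/filtering argument. With respect to the enlarged filtration that also carries $\th$, $X$ is a special semimartingale with continuous martingale part $\s W$, finite-variation part $\int_0^t r\,\Ind{\th\le s}\,\d s$, and jump measure with compensator $\big(\Ind{\th>s}\,\mui\,\d\Fi(y)+\Ind{\th\le s}\,\muo\,\d\Fo(y)\big)\d s$. Projecting these characteristics onto $\FF_t^X$, using $\pi_t=\Px(\th\le t\mid\FF_t^X)$ and the tower property, one gets
\[
	X_t=\s\,\overline W_t+\int_0^t r\,\pi_s\,\d s+\int_0^t\!\!\int_{\R}y\,(N-\widehat\nu)(\d s,\d y),\qquad \widehat\nu(\d s,\d y)=\big((1-\pi_{s-})\,\mui\,\d\Fi(y)+\pi_{s-}\,\muo\,\d\Fo(y)\big)\d s,
\]
with $\overline W$ an $\FF_t^X$-Brownian motion (Lévy's characterization, since $\langle X^c\rangle_t=\s^2 t$) and $N$ the jump measure of $X$, with $\FF_t^X$-compensator $\widehat\nu$. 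Plugging this into Itô's formula for $L_t$, and noting that a jump of $X$ of size $y$ multiplies $L$ exactly by $e^{\beta_0 y}$ and hence $\phi$ by $e^{\beta_0 y}$ as well (since $\Psi_t$ is continuous), yields the announced SDE (\ref{eq:dphi}) for $\phi_t$: its Brownian coefficient is $\beta_0\s\,\phi_{t-}$ and its continuous drift $b^\phi_t$ equals $\lambda(1+\phi_t)+\phi_t\big(\beta_0[r\pi_t-(1-\pi_t)\mui\mi-\pi_t\muo\mo]+\tfrac12\beta_0^2\s^2-\psi_{\infty}(\beta_0)\big)$.

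It then remains to pass to $\pi_t=g(\phi_t)$, $g(y)=y/(1+y)$, and apply Itô's formula, using $g'(y)=(1+y)^{-2}$, $g''(y)=-2(1+y)^{-3}$ and the identities $\phi/(1+\phi)^2=x(1-x)$, $(1+\phi)^{-1}=1-x$ for $x=g(\phi)$. The Brownian part of $g(\phi_t)$ becomes $\beta_0\s\,\pi_{t-}(1-\pi_{t-})\,\d\overline W_t$, giving the second-order coefficient $\tfrac12\beta_0^2\s^2 x^2(1-x)^2=\sbb\,x^2(1-x)^2$; a jump of $X$ of size $y$ carries $\pi_{t-}=x$ to $g(\phi_{t-}e^{\beta_0 y})=\frac{x e^{\beta_0 y}}{1+x(e^{\beta_0 y}-1)}$, which against the $\FF_t^X$-jump intensity $(1-x)\,\mui\,\d\Fi(y)+x\,\muo\,\d\Fo(y)$ contained in $\widehat\nu$ reproduces the integral term of $\mathcal{A}$; and the first-order coefficient is the remaining $\d t$-term from the continuous part of $g(\phi_t)$, namely $g'(\phi)\,b^\phi+\tfrac12 g''(\phi)\,\beta_0^2\s^2\phi^2$ at $\phi=x/(1-x)$.

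The final — mildly laborious — step is to simplify this last quantity using the definition (\ref{eq:lap-inf}) of $\psi_{\infty}(\beta_0)$ and the Esscher relations (\ref{eq:jumps}), which give $\mui\int(e^{\beta_0 y}-1)\,\d\Fi(y)=\muo-\mui$ and $\muo\,\d\Fo(y)=\mui e^{\beta_0 y}\,\d\Fi(y)$: the integral contributions in $\psi_{\infty}(\beta_0)$ and in the jump intensity cancel, the residual term $x\beta_0\big((r+\mui\mi-\muo\mo)-\beta_0\s^2\big)$ vanishes identically by the choice (\ref{eq:beta0}) of $\beta_0$, and what survives is exactly $\lambda(1-x)+x(1-x)(\mui-\muo)$. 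This yields the three terms of $\mathcal{A}$ and completes the proof; once the $\FF_t^X$-decomposition of $X$ (hence (\ref{eq:dphi})) is in hand, the rest is the change of variables $\pi=\phi/(1+\phi)$ and the bookkeeping just described.
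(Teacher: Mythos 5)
Your proof is correct, and it reaches the same SDE for $\pi_t$ and hence the same generator, but it reorganizes the argument in a way worth contrasting with the paper's. The paper works throughout in the filtration that knows $\theta$: it differentiates $L_t$ and then $\phi_t$ and $\pi_t$ keeping the unobservable quantity $\Ind{\th\le t}$ in the drift (equations (\ref{eq:dphi}), (\ref{eq:dpi1})), and only at the very end introduces the innovations Brownian motion $\bar W_t=\beta_0\sigma\int_0^t(\Ind{\th\le s}-\pi_s)\,\d s+W_t$ to replace $\Ind{\th\le t}$ by $\pi_t$ and conclude (\ref{eq:dpi2}). You instead do the projection \emph{first}, at the level of $X$: you pass to the $\FF_t^X$-semimartingale decomposition with continuous martingale part $\sigma\bar W$, drift $\int_0^t r\pi_s\,\d s$, and compensator $\widehat\nu$ for the jump measure, and then all subsequent Itô calculations take place directly in the observed filtration, so $\pi_t$ appears from the start. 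The two are equivalent: one checks easily that the $\bar W$ you produce from the projection coincides with the paper's (since $\beta_0\sigma^2=r+\mui\mi-\muo\mo$ collects the drift and both jump-compensator contributions). What your route buys is conceptual clarity — the $\FF_t^X$-jump compensator $\big((1-\pi_{s-})\mui\,\d\Fi+\pi_{s-}\muo\,\d\Fo\big)\d s$, which the paper only uses implicitly when reading off the integral term of $\mathcal A$, is made explicit upfront, and the Markov property of $\pi_t$ is visible immediately. What it costs is that you invoke the nontrivial innovations/projection-of-characteristics theorem as a black box (this is where you should supply a citation, e.g.\ Liptser–Shiryaev or Jacod–Shiryaev), whereas the paper's computation, though it shuttles an extra indicator around, only needs the single fact that $\bar W$ is an $\FF_t^X$-Brownian motion, for which it cites Shiryaev. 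Your product-rule step for $\d\phi_t$ and the change of variables $\pi=\phi/(1+\phi)$ are both correct, and the algebraic cancellations using (\ref{eq:lap-inf}), (\ref{eq:jumps}), and the definition (\ref{eq:beta0}) of $\beta_0$ work out exactly as you describe.
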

\begin{proof}
In order to derive the generator of $\pi_t$ we will use It\'o's formula. From (\ref{eq:X}) we have
\eqn{\label{eq:dX}\d X_t=\s \d W_t+\Delta X_t+\left(-\mui \mi\Ind{\th>t}+(r-\muo \mo\Ind{\th\leq t})\right)\d t,\nonumber}
where $\Delta X_t=X_t-X_{t_-}$ refers to jumps of $X$ at time $t$. Using (\ref{eq:Ldef}) we have:
\begin{multline*}
\d L_t=\Bigg(\frac{1}{2}\beta_0^2\s^2-\psi_{\infty}(\beta_0)\\
+\beta_0\left((r-\muo \mo)\Ind{\th\leq t}-\mui \mi\Ind{\th>t}\right)\Bigg)L_t\d t\\
+ \beta_0\s L_t\d W_t + L_{t_-}\left(\frac{L_t}{L_{t_-}}-1\right).\nonumber
\end{multline*}
By assumption \eqref{eq:jumps},
\eqn{\int_\R(1-e^{\beta_0 y})\mui \d \Fi(y)=\mui-\muo\nonumber}
and hence by (\ref{eq:lap-inf}) and (\ref{eq:beta0}) we get
\eqn{\label{eq:dL}\d L_t=\left(\beta_0^2\s^2\Ind{\th\leq t}+\mui-\muo\right)L_t\d t+\beta_0\s L_t\d W_t+L_{t_-}\left(e^{\beta_0\Delta X_t}-1\right).\nonumber}
Now using (\ref{eq:phi2}) and the integration-by-parts formula for semimartingales, we derive:
\begin{multline}
\label{eq:dphi}\d\phi_t=\lambda(\phi_t+1) \d t+\left(\beta_0^2\s^2\Ind{\th\leq t}+\mui-\muo\right)\phi_t \d t \\
+ \beta_0\s \phi_t \d W_t+\phi_{t_-}\left(e^{\beta_0\Delta X_t}-1\right)
\end{multline}
and (\ref{eq:phi1}) together with It\'o's formula produces:
\begin{multline}
\label{eq:dpi1}\d\pi_t=\lambda(1-\pi_t)\d t+\pi_t(1-\pi_t)\left(\beta_0^2\s^2\Ind{\th\leq t}+\mui-\muo\right)\d t+\\
+\pi_t(1-\pi_t)\beta_0\s \d W_t-\pi_t^2(1-\pi_t)\s^2\beta_0^2 \d t + \Delta\pi_t.
\end{multline}
The jumps $\Delta\pi_t$ can be expressed in terms of process $X$ as follows:
\begin{multline*}
\Delta\pi_t=\pi_{t_-}\left(\frac{\pi_t}{\pi_{t_-}}-1\right)=\pi_{t_-}\left(\frac{\phi_t}{\phi_{t_-}}\frac{1+\phi_{t_-}}{1+\phi_t}-1\right)\\=
\pi_{t_-}\left(e^{\beta_0\Delta X_t}\frac{1+\phi_{t_-}}{1+\phi_{t_-}e^{\beta_0\Delta X_t}}-1\right)=\pi_{t_-}\left(\frac{e^{\beta_0\Delta X_t}}{1-\pi_{t_-}+\pi_{t_-}e^{\beta_0\Delta X_t}}-1\right)\\
=\frac{\pi_{t_-}(1-\pi_{t_-})\left(e^{\beta_0\Delta X_t}-1\right)}{1+\pi_{t_-}\left(e^{\beta_0\Delta X_t}-1\right)}.
\end{multline*}
To prove that $\pi_t$ solving \eqref{eq:dpi1} is a Markov process,
we introduce auxiliary process
\eqn{\bar{W}_t=\beta_0\s\int_0^t(\Ind{\th\leq s}-\pi_s)\d s+W_t,\nonumber}
% przeliczyć!
which can be checked to be a square integrable martingale with respect to filtration $\FF_t^X$ such that $\E[(\bar{W}_t-\bar{W}_s)^2]=t-s$, see e.g. Shiryaev \cite{shiryaev2006disorder}.
Hence by the L\'evy's theorem it is a Brownian motion. Putting $\bar{W}_t$ into (\ref{eq:dpi1}) we obtain the final form of the dynamics of the process $\pi_t$:
\eqn{\label{eq:dpi2}\d\pi_t=\lambda(1-\pi_t)\d t+\pi_t(1-\pi_t)\beta_0\s \d\bar{W}_t+\pi_t(1-\pi_t)(\mui-\muo)\d t+\Delta\pi_t.}
Thus indeed, $\pi_t$ is a Markov process.

Using It\'o's lemma one more time for $f\in \mathcal{C}^2(\R)$ and using (\ref{eq:dpi2}) together with Dynkin formula,
we can find out that the generator of the process $\pi_t$ is given by:
\eqn{\begin{split}
\mathcal{A}&f(x)=f'(x)\left(\lambda(1-x)+x(1-x)(\mui-\muo)\right)+f''(x)\sbb x^2(1-x)^2
\\&+\int_{\R}\left[f\left(\frac{xe^{\beta_0 y}}{1+x(e^{\beta_0 y}-1)}\right)-f(x)\right]\cdot\left[(1-x)\mui \d \Fi(y)+x\muo \d \Fo(y)\right],
\end{split}\nonumber}
which completes the proof.
\end{proof}

\begin{Lemma}\label{lem:optimal_system}
The optimal value function $V^*(x)$ in Problem \ref{Prob:crit2} satisfies the system
\eqn{\label{eq:sys_lem}\left\{\begin{array}{ll}\mathcal{A}V^*(x)=-cx,&x\in C,\\ V^*(x)=1-x,&x\in D,\end{array}\right.}
where $\mathcal{A}$ is the generator of process $\pi_t$ given in (\ref{eq:generator}), $C$ is an open continuation set
and $D = C^c$ is a stopping set. That is, the optimal stopping rule for Problem \ref{Prob:crit2} is given by
\eqn{\tau^*=\inf\{t\geq 0:\pi_t\in D\}.\nonumber}
\end{Lemma}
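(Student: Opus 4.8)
The plan is to invoke the general verification theory for Mayer--Lagrange optimal stopping problems for Markov processes, as developed in Peskir and Shiryaev \cite{peskir2002solving, peskir2006optimal}, applied to the strong Markov process $\pi_t$ whose generator $\mathcal{A}$ was identified in Lemma \ref{lem:generator}. First I would recast Problem \ref{Prob:crit2} in the standard form $V^*(x)=\inf_\tau \Ex[G(\pi_\tau)+\int_0^\tau H(\pi_s)\,\d s]$ with gain function $G(x)=1-x$ and running cost $H(x)=cx$, both of which are bounded and continuous on the compact state space $[0,1]$; since $H\ge 0$ and $G\ge 0$, the value function is well defined, finite, and satisfies $0\le V^*\le 1$. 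I would then define the continuation set $C=\{x\in[0,1]:V^*(x)<G(x)\}$ and the stopping set $D=C^c=\{x:V^*(x)=G(x)\}$, and recall from the general theory that, because $G$ and $H$ are continuous, $V^*$ is upper semicontinuous (in fact continuous here), so $C$ is open and $D$ is closed; moreover the first-entry time $\tau^*=\inf\{t\ge 0:\pi_t\in D\}$ is the optimal stopping time, provided it is finite $\Px$-a.s., which follows from the fact that $H$ is bounded below by a positive constant on any compact subset of $C$ bounded away from the absorbing-type behaviour near $x=1$, together with the drift term $\lambda(1-\pi_t)\,\d t$ pushing $\pi_t$ upward.

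Next I would establish the two regimes of the system \eqref{eq:sys_lem}. On $D$ the identity $V^*(x)=G(x)=1-x$ is immediate by definition of $D$. On the open set $C$, the standard argument is that the process $M_t:=V^*(\pi_{t})+\int_0^t H(\pi_s)\,\d s$ is a $\Px$-martingale up to the exit time from $C$ (this is the dynamic programming / superharmonic characterization: $V^*$ is the largest function $\le G$ that makes $M_t$ a submartingale, and on $C$ the optimal action is to continue, forcing the martingale property). Applying Dynkin's formula to $M_t$ and differentiating at $t=0$ yields $\mathcal{A}V^*(x)+H(x)=0$, i.e. $\mathcal{A}V^*(x)=-cx$ for $x\in C$. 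To make this rigorous one needs $V^*\in\mathcal{C}^2$ on $C$ so that it lies in the domain of $\mathcal{A}$; as remarked just before Theorem \ref{thmbvp} in the main text, this regularity follows from the smooth-fit considerations of Peskir and Shiryaev \cite[p.~131]{peskir2006optimal}, since the diffusion coefficient $\sbb x^2(1-x)^2$ is nondegenerate on the interior of $C$ and the integral (jump) part of $\mathcal{A}$ maps continuous bounded functions to continuous bounded functions.

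The main obstacle I anticipate is the verification direction together with handling the nonlocal (integro-differential) nature of $\mathcal{A}$: unlike the purely diffusive Shiryaev problem, here $\mathcal{A}$ contains a jump integral, so one must check that the compensated jump part of $M_t$ is a genuine martingale (not merely a local martingale), which requires an integrability estimate on $\int_{\R}|V^*(T(x,y))-V^*(x)|\,[(1-x)\mui\,\d\Fi(y)+x\muo\,\d\Fo(y)]$ where $T(x,y)=xe^{\beta_0 y}/(1+x(e^{\beta_0 y}-1))$; this is where the assumption \eqref{eq:Lfinite} guaranteeing $\int_{|y|\ge 1}e^{\beta_0 y}\,\d\Fi(y)<\infty$ is used, since $T$ maps $[0,1]$ into $[0,1]$ and $V^*$ is bounded, so the integrand is in fact bounded and the finiteness of $\mui,\muo$ suffices. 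Finally I would close the argument by the usual two inequalities: optional stopping of the submartingale $M_t$ at an arbitrary $\tau$ gives $V^*(x)\le\Ex[G(\pi_\tau)+\int_0^\tau H(\pi_s)\,\d s]$, while the martingale property on $[0,\tau^*]$ together with $V^*(\pi_{\tau^*})=G(\pi_{\tau^*})$ on $D$ gives equality for $\tau=\tau^*$, so $\tau^*=\inf\{t\ge 0:\pi_t\in D\}$ is optimal, completing the proof.
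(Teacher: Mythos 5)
Your proposal is correct and takes essentially the same route as the paper: the paper's proof consists of a single citation to Peskir and Shiryaev's general theory (the Dirichlet and Dirichlet/Poisson free-boundary characterization of Mayer--Lagrange problems, Secs.\ 7.1--7.2, Chap.\ III, plus Theorem 2.4, Chap.\ I for optimality of the first-entry time), and your argument is just a more detailed unpacking of exactly that machinery with $G(x)=1-x$ and $H(x)=cx$.
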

\begin{proof}
From \cite[Sec. 7.1 and 7.2, Chap. III, p. 130-133]{peskir2006optimal} it follows that the function $V^*(x)$ solves the sum of \textit{Dirichlet} and \textit{Dirichlet/Poisson} problems and hence it satisfies system (\ref{eq:sys_lem}). Theorem 2.4, Chap. I on p. 37 in \cite{peskir2006optimal} indicates that the first entry time into stopping set $D$ is optimal for this problem.
\end{proof}

\begin{Lemma}\label{lem:concave}
The optimal value function $V^*(x)$ in problem \ref{Prob:crit2} is concave.
\end{Lemma}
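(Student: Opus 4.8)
The plan is to realize $V^*$ as a pointwise infimum of functions that are \emph{affine} in the parameter $x$; since an infimum of affine (in particular concave) functions is concave, this settles the claim. The point that makes this work is that $x$ enters Problem~\ref{Prob:crit1} \emph{only} through the law of the unobservable change time $\theta$ — linearly, via the mixture \eqref{eq:P} — while neither the admissible class $\mathcal T$ of stopping rules nor the conditional law $\Pxs=\mathrm{Law}(X\mid\theta=s)$ of the observation depends on $x$.

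Concretely, I would fix an admissible rule $\tau\in\mathcal T$ and decompose the cost $\Px(\tau<\th)+c\,\Ex[(\tau-\th)^+]$ along $\{\th=0\}$ and $\{\th>0\}$ using \eqref{eq:defpi}. On $\{\th=0\}$ one has $\Px(\tau<\th,\th=0)=\Px(\tau<0)=0$ and $\Ex[(\tau-\th)^+\Ind{\th=0}]=x\,\Exo[\tau]$; on $\{\th>0\}$, conditioning further on $\th=s$ and invoking the $x$-independence of $\Pxs$, the two contributions become $(1-x)\int_0^\infty \Pxs(\tau<s)\,\lambda e^{-\lambda s}\,\d s$ and $(1-x)\int_0^\infty \Exs[(\tau-s)^+]\,\lambda e^{-\lambda s}\,\d s$. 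Summing, the cost equals
\[
G_\tau(x)\ :=\ c\,\Exo[\tau]\,x\ +\ (1-x)\int_0^\infty\big(\Pxs(\tau<s)+c\,\Exs[(\tau-s)^+]\big)\,\lambda e^{-\lambda s}\,\d s ,
\]
an affine function of $x$ whose coefficients (possibly $+\infty$) do not depend on $x$. Since $V^*=\inf_{\tau\in\mathcal T}G_\tau$ by Problem~\ref{Prob:crit1}, for $x_0,x_1$ and $\alpha\in[0,1]$ we get $G_\tau(\alpha x_0+(1-\alpha)x_1)=\alpha G_\tau(x_0)+(1-\alpha)G_\tau(x_1)\ge \alpha V^*(x_0)+(1-\alpha)V^*(x_1)$, and taking the infimum over $\tau$ on the left yields the concavity inequality.

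A second route, closer to the machinery already built in the paper, is to pass to the reference measure $\Pxi$: from \eqref{eq:1pit} together with $\left.\frac{\d\Pxt}{\d\Pxi}\right|_{\FF_t^X}=1$ one reads off $\left.\frac{\d\Px}{\d\Pxi}\right|_{\FF_t^X}=(1-x)e^{-\lambda t}(1+\phi_t)$, whence, using Lemma~\ref{lem:crit}, $\Ex[1-\pi_\tau]=(1-x)\Exi[e^{-\lambda\tau}]$ and $\Ex\!\big[\int_0^\tau\pi_s\,\d s\big]=(1-x)\Exi\!\big[\int_0^\tau e^{-\lambda s}\phi_s\,\d s\big]$; plugging in \eqref{eq:phi2} in the form $(1-x)\phi_s=e^{\lambda s}L_s\,x+(1-x)\lambda e^{\lambda s}L_s\int_0^s e^{-\lambda u}/L_u\,\d u$ again exhibits the cost as affine in $x$ with $x$-free, $\FF^X$-measurable, nonnegative coefficients.

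The only genuinely delicate points are the following. (i) That $\mathcal T$ is the same family for every $x$: this holds on $[0,1)$ because the density above is strictly positive and finite $\Pxi$-a.s., so all $\Px$, $x\in[0,1)$, are equivalent on each $\FF_t^X$ and their usual-conditions augmentations coincide; the endpoint $x=1$ is degenerate ($\pi_0=1$, $\tau^*=0$, $V^*(1)=0$), and concavity there follows from $0\le V^*(x)\le 1-x$, which forces $V^*(1^-)=0=V^*(1)$, plus a routine extension-of-concavity argument from $[0,1)$ to $[0,1]$. (ii) The bookkeeping with values in $[0,\infty]$: all terms being nonnegative, one never meets $\infty-\infty$, and an affine function restricted to the interval where it is finite (set to $+\infty$ elsewhere) is still concave, so the infimum stays concave. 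I expect (i) — precisely the assertion that the admissible stopping rules do not depend on the starting point — to be the main obstacle to a fully rigorous write-up; everything else is just unwinding the mixture identity \eqref{eq:P}.
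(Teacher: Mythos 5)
Your proof is correct, and it takes a cleaner, more elementary route than the paper's. You work directly with Problem~\ref{Prob:crit1} and exhibit, for each fixed stopping rule $\tau$, the cost $\Px(\tau<\th)+c\,\Ex[(\tau-\th)^+]$ as an affine function $G_\tau(x)$ of $x$ by decomposing along $\{\th=0\}$ and $\{\th>0\}$ via the mixture $\Px=x\Pxo+(1-x)\int_0^\infty\Pxs\,\lambda e^{-\lambda s}\,\d s$; concavity of $V^*=\inf_\tau G_\tau$ is then immediate from the fact that a pointwise infimum of affine maps is concave. The paper instead works with the reformulated Problem~\ref{Prob:crit2}: it defines $G(x,t)=\Ex[1-\pi_t+c\int_0^t\pi_s\,\d s]$, verifies linearity in $x$ by computing $\Ex[\pi_t]$ for deterministic $t$, and then argues by a dynamic-programming discretization (restricting to stopping times valued in a finite grid $A_N\subset[0,T]$, showing the discrete value functions $V_N^T(x,k)$ are concave by induction as minima of concave functions, and finally passing to the limits $N\to\infty$ and $T\to\infty$, invoking continuity of $G$ in $t$ and stability of concavity under pointwise limits). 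Both proofs rest on the same affine dependence on $x$; the difference is that you prove the affinity once for a general stopping rule (not only deterministic $t$) and avoid entirely the two-step limiting argument, which makes the write-up shorter and more robust. Your care about whether $\mathcal T$ is $x$-independent is well placed conceptually, but in the paper's setup $\FF^X$ and $\mathcal T$ are fixed once at the outset, so there is nothing further to check there; the equivalence-of-measures remark you give is still the right thing to note if one wanted to start from a weaker framing.
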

\begin{proof}
Let us denote
\eqn{\label{eq:g_fun}G(x,t):=\Ex\left[1-\pi_t+c\int_0^t\pi_s\d s\right].}
We can observe that function $G(x,t)$ is continuous with respect to $t$ for all $x\in[0,1]$ and linear with respect to $x$ for any $t\geq 0$. Indeed,
\eqn{\Ex[\pi_t]=\Px(\th\leq t)=x+(1-x)(1-e^{-\lambda t})=xe^{-\lambda t}+(1-e^{-\lambda t})\nonumber}
and
\eqn{\Ex\left[c\int_0^t\pi_s\d s\right]=c\int_0^t\Ex[\pi_s]\d s=cx\int_0^te^{-\lambda s}\d s+c\int_0^t(1-e^{-\lambda s})\d s.\nonumber}
Therefore, function $G(x,t)$ is linear with respect to $x$ as a sum of linear functions. The optimal value function can be expressed as
\eqn{V^*(x)=\inf_{\tau}G(x,\tau).\nonumber}
To prove that it is concave, firstly consider only stopping times $\tau\leq T$ for some finite time horizon $T$. For some fixed $N\in\N^+$ let $A_N:=\{0,T/N, 2T/N,\ldots,T\}$. Consider smaller stopping times families $\mathcal{M}_{k,N}:=\{\tau\leq T:\tau\in A_N, \tau\geq kT/N\}$ for $k\in\{0,1,\ldots,N\}$. Further, let us define
\eqn{V_N^T(x,k):=\inf_{\tau\in\mathcal{M}_{k,N}}G(x,\tau).\nonumber}
Using the principle of dynamic programming we get the following equalities:
\eqn{V_N^T(x,N)=G(x,T),\nonumber}
\eqn{V_N^T(x,k)=\min\left\{V_N^T(x,k+1), G(x,kT/N)\}, \; k\in\{0,1,\ldots,N-1\right\}.\nonumber}
Since $G(x,t)$ is concave (because it is linear) w.r.t. $x$ and minimum of two concave functions is again concave, we conclude that $V_N^T(x,0)$ is concave.

Now we prove that $V_N^T(x,0)\xrightarrow{{N\to\infty}}V^T(x):=\inf_{\tau\leq T}G(x,\tau)$. Let $\tau^{*T}$ be optimal for $V^T(x)$, i.e. $V^T(x)=G(x,\tau^{*T})$. Consider $\bar{\tau}_N^T:=\inf\{u\geq \tau^{*T}:u\in A_N\}$. Then $\bar{\tau}_N^T\in\mathcal{M}_{0,N}$ and the following inequality holds
\eqn{\label{eq:lem_conc_ineq}G\left(x,\bar{\tau}_N^T\right)\geq V_N^T(x,0)\geq V^T(x).}
But $\bar{\tau}_N^T\xrightarrow{N\to\infty}\tau^{T*}$ a.s. from the definition. Since $G(x,t)$ is continuous w.r.t. $t$, then
\eqn{G\left(x,\bar{\tau}^T_N\right)\xrightarrow{N\to\infty}G\left(x,\tau^{*T}\right)=V^T(x).\nonumber}
Hence, using inequality (\ref{eq:lem_conc_ineq}) we get that $V_N^T(x,0)\xrightarrow{N\to\infty}V^T(x)$ for all $x\in[0,1]$. Since the limit of a convergent sequence of concave functions is again concave, we conclude that $V^T(x)=\inf_{\tau\leq T}G(x,\tau)$ is concave. Now, passing to infinity with time horizon $T$, we obtain
\eqn{V^T(x)\xrightarrow{T\to\infty}V^*(x).\nonumber}
We can now conclude that $V^*(x)$ is indeed concave as the limit of the sequence of concave functions and the proof is completed.
\end{proof}

\begin{Lemma}\label{lem:contset}
The continuation set is equal to $C=[0,A^*)$.
\end{Lemma}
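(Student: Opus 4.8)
The plan is to pin down the continuation set $C$ from two facts that are already in place: the elementary bound $V^*(x)\le 1-x$, obtained by taking $\tau=0$ in Problem \ref{Prob:crit2} (recall $G(x,t)$ from \eqref{eq:g_fun}, so that $G(x,0)=\Ex[1-\pi_0]=1-x$), together with the concavity of $V^*$ established in Lemma \ref{lem:concave}. Since $D=C^c$, we have $C=\{x\in[0,1]:V^*(x)<1-x\}$ and $D=\{x\in[0,1]:V^*(x)=1-x\}$. The first thing I would do is introduce $h(x):=V^*(x)-(1-x)$; since $V^*$ is concave and $x\mapsto 1-x$ is affine, $h$ is concave on $[0,1]$, and $h\le 0$ everywhere. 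Consequently $D=\{h=0\}=\{h\ge 0\}$ is a superlevel set of the concave function $h$, hence an interval (a convex subset of $[0,1]$). It then remains to locate the endpoints of this interval: I want to show that it has $1$ as its upper endpoint and that it does not contain $0$.

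For the upper endpoint I would observe that $1\in D$: if $\pi_0=1$ then $\pi_t\equiv 1$ for all $t$ (immediate from the dynamics \eqref{eq:dpi2}, or directly from the definition of $\pi_t$), so $G(1,\tau)=c\,\Ex[\tau]\ge 0$ for every stopping time $\tau$ and hence $V^*(1)=0=1-1$. For the lower endpoint I would show $0\in C$, i.e. $V^*(0)<1$, by differentiating $t\mapsto G(x,t)$ at $t=0$: using the explicit expressions for $\Ex[\pi_t]$ recorded in the proof of Lemma \ref{lem:concave} one computes $\frac{\partial}{\partial t}G(x,t)\big|_{t=0^+}=-\lambda(1-x)+cx$, which at $x=0$ equals $-\lambda<0$; hence $G(0,t_0)<G(0,0)=1$ for all small $t_0>0$, so $V^*(0)<1$ and $0\in C$. (The same computation in fact gives $[0,\lambda/(\lambda+c))\subseteq C$, so $A^*\ge \lambda/(\lambda+c)>0$.) Thus $D$ is an interval inside $(0,1]$ whose upper endpoint is $1$, i.e. $D$ equals $(A^*,1]$ or $[A^*,1]$ for some $A^*\in(0,1]$; and since $C$ is open by Lemma \ref{lem:optimal_system}, $D$ is closed, which forces $D=[A^*,1]$ and therefore $C=[0,A^*)$.

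I do not anticipate a real obstacle here: the substantive ingredient is the concavity of $V^*$, which is Lemma \ref{lem:concave}, and the rest is the elementary fact that superlevel sets of concave functions are intervals together with the two boundary evaluations above. The only points that need a little care are establishing the \emph{strict} inequality $V^*(0)<1$ (so that $0$ genuinely belongs to $C$ and $A^*>0$) and the small amount of endpoint bookkeeping — distinguishing $[A^*,1]$ from $(A^*,1]$ — which is handled by invoking the openness of $C$ from Lemma \ref{lem:optimal_system}.
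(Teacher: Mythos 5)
Your proof is correct and takes essentially the same route as the paper: bound $V^*(x)\le 1-x$ by taking $\tau\equiv 0$, use the concavity of $V^*$ from Lemma \ref{lem:concave} to conclude the stopping set $D=\{x:V^*(x)=1-x\}$ is an interval, and rule out $1\in C$ by an endpoint evaluation. You are somewhat more careful than the paper---you additionally establish $0\in C$ here via $\partial_t G(x,t)\big|_{t=0^+}=-\lambda(1-x)+cx$ (yielding the bonus estimate $A^*\ge\lambda/(\lambda+c)$, whereas the paper defers $A^*>0$ to Lemma \ref{lem:nonincr}), and you explicitly invoke openness of $C$ from Lemma \ref{lem:optimal_system} to settle the $[A^*,1]$ versus $(A^*,1]$ bookkeeping, which the paper leaves implicit.
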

\begin{proof}
Let us observe that $V^*(x)$ is bounded from above by $V_0(x):=1-x$ for all $x\in[0,1]$. Consider the stopping time $\tau_0\equiv 0$. Then $G(x,\tau_0)=1-x$ for $G(x,t)$ given by (\ref{eq:g_fun}). Since $V^*(x)=\inf_{\tau}G(x,\tau)$, we get that indeed $V^*(x)\leq G(x,\tau_0)=1-x$.

Since $V^*(x)$ is concave, then the continuation set is either of the form $C=[0,A) \cup (B,1]$ or $[0,A)$, for some $A,B\in[0,1]$. Now, if the first case holds, then $V^*(1)<0$, which contradicts the definition of the value function which is nonnegative. Hence, $C=[0,A^*)$ for some $A^*\in[0,1]$.
\end{proof}

\begin{Lemma}\label{lem:nonincr}
The optimal value function $V^*(x)$ in Problem \ref{Prob:crit2} satisfies the \textit{normal entrance} boundary condition (\ref{norm_ent}) and it is non-increasing.
\end{Lemma}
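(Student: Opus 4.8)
The plan is to establish the normal entrance condition~\eqref{norm_ent} first and then to read off monotonicity almost for free. The structural facts I rely on are already available: $V^*$ solves $\mathcal{A}V^*(x)=-cx$ on the continuation set (Lemma~\ref{lem:optimal_system}), that set equals $C=[0,A^*)$ (Lemma~\ref{lem:contset}), $V^*$ is concave (Lemma~\ref{lem:concave}), and, being an infimum over $\tau$ of functions that are linear in $x$ (as exhibited inside the proof of Lemma~\ref{lem:concave}), $V^*$ is upper semicontinuous; together with concavity this makes $V^*$ continuous on $[0,1]$, with $0\le V^*\le1$.

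For normal entrance I would proceed as follows. On $(0,A^*)$ the second--order coefficient $\sbb\,x^2(1-x)^2$ of $\mathcal{A}$ is strictly positive while the jump part is a bounded integral operator, so by the interior regularity arguments used on p.~131 of \cite{peskir2006optimal} the function $V^*$ is $\mathcal{C}^2$ on $(0,A^*)$, and there one may rewrite $\mathcal{A}V^*=-cx$ as
\begin{multline*}
-\sbb\,x^2(1-x)^2\,(V^*)''(x)=cx+(V^*)'(x)\bigl(\lambda(1-x)+x(1-x)(\mui-\muo)\bigr)\\
+\int_{\R}\bigl[V^*(\Phi(x,y))-V^*(x)\bigr]\bigl[(1-x)\mui\,\d\Fi(y)+x\muo\,\d\Fo(y)\bigr],
\end{multline*}
where $\Phi(x,y):=\tfrac{xe^{\beta_0 y}}{1+x(e^{\beta_0 y}-1)}$. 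First I would show $(V^*)'(0^+)$ is finite: the leading homogeneous equation $\sbb x^2 f''+\lambda f'=0$ has the solution with $f'(x)\propto\exp\bigl(2\lambda/(\sigma^2\beta_0^2 x)\bigr)$, unbounded at $0$; so were $(V^*)'(0^+)=+\infty$, then — since $cx\to0$ and the integral stays bounded because $|V^*(\Phi(x,y))-V^*(x)|\le1$ — the right--hand side would be $\gtrsim\tfrac{\lambda}{2}(V^*)'(x)$ near $0$, forcing $-\,(V^*)''(x)/(V^*)'(x)\gtrsim(\sigma\beta_0 x)^{-2}$, hence $(V^*)'(x)\gtrsim e^{c_0/x}$ with $c_0>0$ and $\int_{0^+}(V^*)'=\infty$, contradicting $V^*\ge0$. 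Once $(V^*)'(0^+)$ is finite I let $x\downarrow0$: the integral term tends to $0$ by dominated convergence (using $\Phi(x,y)\to0$, the bound $|V^*(\Phi(x,y))-V^*(x)|\le1$ against the finite measure $\mui\,\d\Fi$, continuity of $V^*$ at $0$, and the prefactor $x$ that kills the $\Fo$--part), so $\sbb x^2(1-x)^2(V^*)''(x)$ has a finite limit, which must be $0$, for otherwise $(V^*)''(x)$ behaves like a nonzero multiple of $x^{-2}$ and again $(V^*)'(0^+)=\pm\infty$. What remains is $\lambda(V^*)'(0^+)=0$, i.e. $(V^*)'(0^+)=0$, which is~\eqref{norm_ent}. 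Conceptually this is just the statement that $x=0$ is a regular singular point of $\mathcal{A}f=-cx$ and that, among its solutions, the bounded one — hence $V^*$ — is the ``regular'' branch with vanishing derivative at the origin; the Frobenius computation behind Theorem~\ref{thmbvp} makes this dichotomy explicit.

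For monotonicity I would then just note that concavity makes $(V^*)'$ non-increasing on $(0,A^*)$, so $(V^*)'(x)\le\lim_{z\to0^+}(V^*)'(z)=(V^*)'(0^+)=0$ there; with continuity this shows $V^*$ is non-increasing on $[0,A^*]$. On the stopping set $D=C^c=[A^*,1]$ we have $V^*(x)=1-x$ (Lemmas~\ref{lem:optimal_system} and~\ref{lem:contset}), which is decreasing. Hence $V^*$ is non-increasing on all of $[0,1]$.

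The main obstacle is the first half of the normal-entrance step: one must control $V^*$ near the degenerate boundary $x=0$ precisely enough — finiteness of $(V^*)'(0^+)$ together with $x^2(V^*)''(x)\to0$ — to legitimately pass to the limit in the integro-differential equation. Once that is in place, the remaining claims, including the whole monotonicity statement, are immediate.
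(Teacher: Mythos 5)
Your overall route is the same as the paper's: use the ODE $\mathcal{A}V^*=-cx$ on $C$ together with concavity (Lemma~\ref{lem:concave}) to read off $(V^*)'(0^+)=0$, and then use concavity again to go from the vanishing boundary derivative to $(V^*)'\le 0$. The paper compresses the normal-entrance step into ``taking $x\to 0$ in the first equation of \eqref{eq:sys_lem2}''; you fill this in carefully by first ruling out $(V^*)'(0^+)=+\infty$ via a comparison with the indicial solution of $\sbb x^2 f''+\lambda f'=0$, then using dominated convergence for the integral term and the concavity sign to show $x^2(1-x)^2(V^*)''(x)\to 0$, which forces $\lambda(V^*)'(0^+)=0$. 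That extra detail is welcome and correct.

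However, you omit a step that the paper performs first and that is logically indispensable: showing that $0\in C$, i.e.\ $A^*>0$. Lemma~\ref{lem:contset} only gives $C=[0,A^*)$ for some $A^*\in[0,1]$; if $A^*=0$ then the continuation set is empty, $V^*(x)=1-x$ everywhere, and the normal-entrance condition \eqref{norm_ent} is simply false ($(V^*)'(0^+)=-1$). Your argument reasons ``on $(0,A^*)$'' and passes $x\downarrow 0$ inside that interval, so it silently presupposes $A^*>0$ without establishing it. The paper closes this gap with a short probabilistic observation: when $x=0$, $\th>0$ almost surely and is exponentially distributed, so stopping immediately is strictly suboptimal, hence $V^*(0)<1$ and $0$ lies in the continuation set, giving $A^*>0$. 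You should add this (or an equivalent) step at the beginning; the rest of your proof then goes through.
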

\begin{proof}
We start from the proof that $0\in C$. Consider $x=0$. Then the drift always changes at strictly positive, exponentially distributed time $\th$. Therefore it is not optimal to stop immediately. Thus $A^*>0$ and $V^*(0)<1$.

From Lemmas \ref{lem:optimal_system} and \ref{lem:contset} it follows that for some $A^*>0$ the optimal value function $V^*$ satisfies the following system of equations:
\eqn{\label{eq:sys_lem2}\left\{\begin{array}{ll}\mathcal{A}V^*(x)=-cx,&x\in [0,A^*),\\ V^*(x)=1-x,&x\in [A^*,1],\end{array}\right.}
for $\mathcal{A}$ given in (\ref{eq:generator}). Taking $x \to 0$ in the first equation of (\ref{eq:sys_lem2}) we get the \textit{normal entrance} condition ${V^*}'(0^+)=0$.
Now from Lemma \ref{lem:concave} we know that $V^*(x)$ is concave and hence ${V^*}''(x)\leq 0$. This means that ${V^*}'(x)$ is non-increasing and, since ${V^*}'(0^+)=0$, then $\forall_{x\in[0,1]} \; {V^*}'(x)\leq 0$. Hence $V^*(x)$ is non-increasing.
\end{proof}

\begin{Lemma}\label{smoothfitlemma}
The optimal value function $V^*(x)$ in Problem \ref{Prob:crit2} satisfies the smooth fit condition
\eqref{smooth_fit}.
\end{Lemma}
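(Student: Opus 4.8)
The plan is to prove the two inequalities $ {V^*}'(A^{*-})\ge -1$ and ${V^*}'(A^{*-})\le -1$ separately; since the gain function $x\mapsto 1-x$ has slope $-1$, the two together give ${V^*}'(A^{*-})=-1$, which is \eqref{smooth_fit}. First I would collect the facts already at hand: by Lemma~\ref{lem:concave} the value $V^*$ is concave on $[0,1]$, so its one-sided derivatives exist everywhere, and $A^*\in(0,1)$ --- indeed $A^*>0$ by Lemma~\ref{lem:nonincr}, while $A^*=1$ is excluded because then $\tau^*=\inf\{t:\pi_t\ge 1\}=\infty$ a.s.\ (by \eqref{eq:phi2}, $\pi_t<1$ for every finite $t$), forcing $V^*(x)=\infty$, whereas $V^*\le 1-x<\infty$. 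Hence $V^*(x)=1-x$ on the nondegenerate interval $[A^*,1]$ (Lemma~\ref{lem:contset}), so ${V^*}'(A^{*+})=-1$, and concavity yields ${V^*}'(A^{*-})\ge {V^*}'(A^{*+})=-1$. (Alternatively: from $V^*\le 1-x$ and $V^*(A^*)=1-A^*$ one has $V^*(A^*)-V^*(A^*-\varepsilon)\ge -\varepsilon$, so the secant slope $(V^*(A^*)-V^*(A^*-\varepsilon))/\varepsilon\ge -1$, and $\varepsilon\downarrow 0$ gives ${V^*}'(A^{*-})\ge -1$ again.)

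For the reverse inequality I would localize $\pi$ at the boundary. Fix $\varepsilon>0$ small, start the process at $\pi_0=A^*$, and set $\tau_\varepsilon:=\inf\{t\ge 0:\pi_t\notin(A^*-\varepsilon,A^*+\varepsilon)\}$. Since $A^*\in(0,1)$ and $\beta_0\s\ne 0$, the diffusion coefficient $\pi_t(1-\pi_t)\beta_0\s$ in the dynamics \eqref{eq:dpi2} of $\pi_t$ is bounded away from zero near $A^*$, so the standard exit-time estimate for a nondegenerate diffusion (the finite-activity jumps only accelerate the exit) gives $\E^{A^*}[\tau_\varepsilon]=O(\varepsilon^2)$ and $\tau_\varepsilon\downarrow 0$ a.s. From general optimal stopping theory (cf.\ \cite[p.~131]{peskir2006optimal}) one has, for every stopping time $\tau$, the dynamic programming inequality $V^*(x)\le\Ex[V^*(\pi_\tau)+c\int_0^\tau\pi_s\,\d s]$; applying it at $x=A^*$ with $\tau=\tau_\varepsilon$, using $V^*(A^*)=1-A^*$, $0\le V^*\le 1$ and $\E^{A^*}[c\int_0^{\tau_\varepsilon}\pi_s\,\d s]\le c\,\E^{A^*}[\tau_\varepsilon]=O(\varepsilon^2)$, gives
\eqnn{1-A^*\ \le\ \E^{A^*}\!\left[V^*(\pi_{\tau_\varepsilon})\right]+O(\varepsilon^2).}
Now split the expectation by the exit side: on $\{\pi_{\tau_\varepsilon}\ge A^*+\varepsilon\}$ one has $V^*(\pi_{\tau_\varepsilon})=1-\pi_{\tau_\varepsilon}\le 1-A^*-\varepsilon$; on $\{\pi_{\tau_\varepsilon}\le A^*-\varepsilon\}$ the continuous exits land exactly at $A^*-\varepsilon$ and contribute $V^*(A^*-\varepsilon)$, while jump overshoots --- which carry probability $O(\varepsilon^2)$ by finite jump activity --- contribute $O(\varepsilon^2)$ as $0\le V^*\le 1$. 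Writing $p_\varepsilon:=\P^{A^*}(\pi_{\tau_\varepsilon}\ge A^*+\varepsilon)$ and using $V^*(A^*-\varepsilon)=(1-A^*)-{V^*}'(A^{*-})\varepsilon+o(\varepsilon)$ (the definition of the left derivative), the last display becomes
\eqnn{0\ \le\ -\varepsilon\big(p_\varepsilon+(1-p_\varepsilon)\,{V^*}'(A^{*-})\big)+o(\varepsilon).}
Dividing by $\varepsilon>0$, letting $\varepsilon\to 0$ and using $p_\varepsilon\to\tfrac12$ yields $\tfrac12+\tfrac12\,{V^*}'(A^{*-})\le 0$, i.e.\ ${V^*}'(A^{*-})\le -1$; combined with the first step this gives ${V^*}'(A^{*-})=-1$.

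The hard part --- the only step that is not routine --- is the quantitative control of how the jump--diffusion $\pi_t$ leaves the shrinking symmetric interval $(A^*-\varepsilon,A^*+\varepsilon)$ started at its midpoint $A^*$: that $\E^{A^*}[\tau_\varepsilon]=O(\varepsilon^2)$, that $p_\varepsilon\to\tfrac12$, and that a jump overshoot past $A^*\pm\varepsilon$ has probability $O(\varepsilon^2)$. All three rest on the Gaussian term $\pi_t(1-\pi_t)\beta_0\s\,\d\bar W_t$ dominating at scale $\varepsilon$: a driftless diffusion exits a symmetric interval from its midpoint with probability exactly $\tfrac12$ on each side; the drift of $\pi_t$ contributes only an $O(\varepsilon)$ correction to $p_\varepsilon$ (through the curvature of the scale function over a width-$2\varepsilon$ window); and the finite-activity compound-Poisson part --- whose probability of firing at all before the continuous exit is $O(\varepsilon^2)$ --- perturbs the exit law only at order $O(\varepsilon^2)$. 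This is exactly where the standing assumption $\s>0$ (the nontrivial Gaussian component, together with $\beta_0\ne 0$) is indispensable: for a purely discontinuous perturbation one would in general obtain only continuous fit, not smooth fit. The remaining ingredients --- the dynamic programming inequality, concavity, the secant-slope bound --- follow the reasoning of Peskir and Shiryaev \cite[p.~131]{peskir2006optimal}.
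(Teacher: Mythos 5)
Your proposal is correct but takes a genuinely different route for the crucial second inequality. The first half (that ${V^*}'(A^{*-})\ge -1$ from concavity, $V^*\le 1-x$ and equality at $A^*$) is essentially what the paper does too. For ${V^*}'(A^{*-})\le -1$, however, the paper proceeds analytically: it applies the Eisenbaum--Kyprianou change-of-variable formula for jump processes together with the Dynkin formula to decompose $V^*(\pi_t)$ into a local martingale, two absolutely continuous drift terms coming from $V^*|_{x<A^*}$ and $V^*|_{x>A^*}$, and a local-time term $\int_0^t \big({V^*}'(\pi_{s+})-{V^*}'(\pi_{s-})\big)\,\d L^{A^*}_s$. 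It then exploits the submartingale property of $V^*(\pi_t)+c\int_0^t\pi_s\,\d s$ and a variation/scale-separation argument (the local-time contribution is of a coarser nature than the time-integral drifts, precisely because $\sigma>0$) to force ${V^*}'(A^{*+})-{V^*}'(A^{*-})\ge 0$. You instead use a probabilistic localization argument in the Grigelionis--Shiryaev/Bather style: stop at the exit time $\tau_\varepsilon$ of a small symmetric window around $A^*$, apply the dynamic-programming inequality, and extract the derivative bound from the three scaling facts $\E[\tau_\varepsilon]=O(\varepsilon^2)$, $p_\varepsilon\to\tfrac12$, and jump overshoot probability $O(\varepsilon^2)$. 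Both arguments identify the same driving mechanism --- the nondegenerate Gaussian component $\sigma\beta_0\ne 0$ is what forces the equality of the one-sided slopes; without it one only gets continuous fit --- but they encode it differently: the paper in the fact that the local-time term is ``seen'' by the submartingale inequality separately from the finite-variation terms, you in the fact that a driftless nondegenerate diffusion exits a shrinking symmetric interval from its midpoint with limiting probability $\tfrac12$ on each side. Your approach is more elementary and self-contained, at the cost of having to establish (or cite) the three exit-scaling estimates for the jump-diffusion $\pi_t$; the paper's approach is shorter once the Eisenbaum--Kyprianou machinery is accepted, but it rests on a somewhat delicate variation argument. One small point in your write-up: you should note explicitly that the $o(\varepsilon)$ error in the expansion $V^*(A^*-\varepsilon)=(1-A^*)-{V^*}'(A^{*-})\varepsilon+o(\varepsilon)$ uses that the left derivative exists, which follows from concavity (Lemma~\ref{lem:concave}); you do invoke concavity for the first inequality but it is also what licenses this Taylor-type expansion.
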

\begin{proof}
From the general optimal stopping theory we know that the payoff function $1-x$ dominates $V^*(x)$
and $V^*(\pi_t)-c\pi_t$ is a submartingale; see Peskir and Shiryaev \cite[Thm. 24, p. 37, and Chap.III]{peskir2006optimal}.

Since the payoff function dominates the value function
and both are non-increasing we have that
${V^*}'(A^*-)\geq {V^*}'(A^*+)$.
To prove the inequality in the opposite direction we
use the change of variable formula presented in Eisenbaum and Kyprianou \cite{Eisenbaum&Kyprianou} together with Dynkin formula:
\begin{align}
V^*(\pi_t)-c\pi_t=&M_t+V^*(x)-cx+\int_0^t\mathcal{A}(V^*_1(\pi_s)-c\pi_s)\d s +\int_0^t\mathcal{A}(V^*_2(\pi_s)-c\pi_s)\d s\nonumber\\
&+ \int_0^t\frac{\partial}{\partial x}\left(V^*(\pi_{s+})-V^*(\pi_{s-})\right)\d L^{A^*}_s,\nonumber
\end{align}
where $M_t$ is a local martingale, $L^{A^*}_s$ is a local time of $\pi$ at $A^*$ and $V^*_1(x)=V^*(x)|_{x>A^*}$ and $V^*_2(x)=V^*(x)|_{x<A^*}$.
Note that functions $V^*_1$ and $V^*_2$ are in the domain of the infinitesimal generator $\mathcal{A}$ (see Eisenbaum and Kyprianou \cite[Thm. 2]{Eisenbaum&Kyprianou}).
Now, from the fact that $V^*(\pi_t)-c\pi_t$ is a submartingale it follows that
\begin{eqnarray*}
\lefteqn{\Ex\left\{\int_w^t\mathcal{A(}V^*_1(\pi_s,)-c\pi_s)\d s+\int_w^t\mathcal{A}(V^*_2(\pi_s)-c\pi_s)\d s\right.} \\&&\left.+\int_w^t\frac{\partial}{\partial x}\left(V^*(\pi_{s+})-V^*(\pi_{s-})\right)\d L^{A^*}_s\right\}\geq 0\label{inequ1}
\end{eqnarray*}
for any $0\leq w\leq t$.
From Eisenbaum and Kyprianou \cite[Thm. 3]{Eisenbaum&Kyprianou} it follows that the process
$$t\rightarrow \int_w^t\frac{\partial}{\partial x}\left(V^*(\pi_{s+})-V^*(\pi_{s-})\right)\d L^{A^*}_s$$ is of unbounded variation on any finite interval
similarly as $X_t$ is by assumption \eqref{sigma}.
Additionally, the processes
$t\rightarrow \int_0^t\mathcal{A}(V^*_1(\pi_s)-c\pi_s)\d s $ and $t\rightarrow \int_0^t\mathcal{A}(V^*_2(\pi_s)-c\pi_s)\d s $ are of bounded variation.
Thus, taking $t\rightarrow 0$ in \eqref{inequ1} we can conclude that
\begin{align}
\Ex\int_w^te^{-qs}\frac{\partial}{\partial x}\left(V^*(\pi_{s+})-V^*(\pi_{s-})\right)\d L^{A^*}_s\geq 0
\end{align}
for all sufficiently small $w$ and $t$, otherwise dividing \eqref{inequ1} by $t-w$ and taking $w\rightarrow t$ would produce a contradiction.
Since the local time $L_t^{A^*}$ is nondecreasing and it increases only when process $\pi_t$ enters interval $(A^*,1]$ from the set $[0, A^*)$,
then by taking $x=A^*$, the following inequality must hold true:
$$\frac{\partial}{\partial x}V^*(x)|_{x\downarrow A^*}-\frac{\partial}{\partial x}V^*(x)|_{x\uparrow A^*}\geq 0.$$
This inequality
completes the proof of the \textit{smooth fit} property at $A^*$.
\end{proof}

\paragraph{Proof of Theorem \ref{thm:stopping}}
From Lemmas \ref{lem:optimal_system} and \ref{lem:contset} it follows that the optimal value function $V^*(x)$ satisfies the system (\ref{optimal_system}) for some $A^*$. From Lemmas \ref{smoothfitlemma}
and \ref{lem:nonincr} we know that it satisfies boundary conditions \eqref{smooth_fit} and (\ref{norm_ent}). The boundary condition (\ref{cont_fit}) is satisfied just by the definition of the value function, which is continuous.

\exit

\paragraph{Proof of Theorem \ref{thmbvp}}
We recall that the infinitesimal generator $\mathcal{A}$ of $\pi_t$ appearing in (\ref{optimal_system}) is given by (\ref{eq:generator}), where
the distributions of jump sizes $\Fi(y)$ and $\Fo(y)$ are given by (\ref{eq:f1}) and \eqref{eq:f2}, respectively.

We are looking for the function $f(x)$ solving the equation
\eqn{\label{eq:mainf}\mathcal{A}f(x)=-cx, \quad x\in[0,A^*),}
given that
\eqn{\label{eq:boundf}f(A^{*-})=1-A^*,\quad f'(A^{*-})=-1,\quad f'(0^+)=0.}
For chosen jump distribution (\ref{eq:f1}), equation (\ref{eq:mainf}) takes the following form:
\eqn{\label{eq:main1}\begin{split}
&f'(x)\left(\lambda(1-x)-x(1-x)(\muo-\mui)\right)+f''(x)\sbb x^2(1-x)^2+cx\\
&+\int_{\R}\left[f\left(\frac{xe^{\bb y}}{1+x(e^{\bb y}-1)}\right)-f(x)\right]\\
&\q\cdot \Bigg[\left(\frac{(1-x)\mui p_1}{w}e^{-y/w}+\frac{x\muo q_1(1-\bb w)}{w}e^{-y\frac{1-\bb w}{w}}\right)\Ind{y\geq0}\\
&\q+\left(\frac{(1-x)\mui p_2}{w}e^{y/w}+\frac{x\muo q_2(1+\bb w)}{w}e^{y\frac{1+\bb w}{w}}\right)\Ind{y<0}\Bigg]dy=0.
\end{split}}
The integral in the above equation can be divided into two disjoint integration regions: from $-\infty$ to 0 and from 0 to $\infty$.
Then both of these integrals can be integrated by parts. Further, we substitute $z=xe^{\bb y}/(1+x(e^{\bb y}-1))$. Using the following observations
\eqn{\frac{\d}{\d y}\left(\frac{xe^{\bb y}}{1+x(e^{\bb y}-1)}\right)=-\frac{\bb(x-1)xe^{\bb y}}{(x(e^{\bb y}-1)+1)^2},\nonumber}
\eqn{\begin{split}&\lim_{y\to\infty}\frac{xe^{\bb y}}{1+x(e^{\bb y}-1)}=1,\\ &\lim_{y\to 0}\frac{xe^{\bb y}}{1+x(e^{\bb y}-1)}=x,\\& \lim_{y\to -\infty}\frac{xe^{\bb y}}{1+x(e^{\bb y}-1)}=0\end{split}\nonumber}
we transform the initial integral into
\eqn{\label{eq:integral1}\begin{split}
&\frac{x^{\gamma}}{(1-x)^{\gamma-1}}\int_x^1f'(z)\cdot\left(\mui p_1\left(\frac{1-z}{z}\right)^{\gamma}+\muo q_1\left(\frac{1-z}{z}\right)^{\gamma-1}\right)\d z\\
&-\frac{(1-x)^{\gamma+1}}{x^{\gamma}}\int_0^xf'(z)\cdot\left(\mui p_2\left(\frac{z}{1-z}\right)^{\gamma}+\muo q_2\left(\frac{z}{1-z}\right)^{\gamma+1}\right)\d z,
\end{split}}
where we denote $\gamma:=1/(\bb w)$.
To get rid of the first integral in \eqref{eq:integral1} we multiply both sides
of equation \eqref{eq:main1} by $(1-x)^{\gamma-1}/x^{\gamma}$ and we differentiate it with respect to $x$.
After reordering and multiplying obtained equation by $x^{2\gamma+1}/(2\gamma(1-x)^{2\gamma-1})$ we derive
\eqn{\label{eq:main2}\begin{split}
&f'(x)\cdot\left[-\frac{\lambda}{2}\frac{x^{\gamma}}{(1-x)^{\gamma}}+\frac{(\gamma-1)\muo-\gamma\mui}{2\gamma}\frac{x^{\gamma+1}}{(1-x)^{\gamma}}\right]\\
&+f''(x)\cdot\Bigg[\frac{\lambda}{2\gamma}\frac{x^{\gamma+1}}{(1-x)^{\gamma-1}}-\frac{\muo-\mui}{2\gamma}\frac{x^{\gamma+2}}{(1-x)^{\gamma-1}}\\
&\q\q-\sbb\frac{\gamma-2+3x}{2\gamma}\frac{x^{\gamma+2}}{(1-x)^{\gamma-1}}\Bigg]\\
&+f'''(x)\cdot\sbb\frac{x^{\gamma+3}}{(1-x)^{\gamma-2}}\frac{1}{2\gamma}-c\frac{\gamma-1}{2\gamma}\frac{x^{\gamma+1}}{(1-x)^{\gamma+1}}\\
&+\int_0^xf'(z)\left(\mui p_2\left(\frac{z}{1-z}\right)^{\gamma}+\muo q_2\left(\frac{z}{1-z}\right)^{\gamma+1}\right)\d z=0.
\end{split}}
Now we differentiate the last equation with respect to $x$ to get rid of the last integral.
After reordering and multiplying by $2\gamma(1-x)^{\gamma+2}/x^{\gamma-1}$ we get:
\eqn{\label{eq:main3}\begin{split}
&y(x)\Big[-(1-x)\lambda\gamma^2+x(1-x)\Big(\muo(x(2\gamma q_2-\gamma+1)+\gamma^2-1)\\
&\q\q\q\q\q\q\q+\mui(x(\gamma-2\gamma p_2)+2\gamma p_2-\gamma^2-\gamma)\Big)\Big]\\
&+y'(x)\Big[x(1-x)^2(\lambda-2\lambda x)+x^2(1-x)^2\Big(\muo(3x-3)+\mui(2-3x)\\
&\q\q\q\q\q\q\q\q\q-\sbb(\gamma^2-12x^2+15x-4)\Big)\Big]\\
&+y''(x)\left[x^2(1-x)^3\lambda-x^3(1-x)^3\left(\muo-\mui+(8x-5)\sbb)\right)\right]\\
&+y'''(x)x^4(1-x)^4\sbb-c(\gamma^2-1)x=0
\end{split}}
for
$$y(x):=f'(x).$$
By inspection one can show that the above nonhomogeneous equation has two singular points: $x=0$ and $x=1$. Both are regular but the latter is of the first kind while the former of second. The theory of such singular ordinary equations is well-developed and states that our equation has a unique solution which can be represented by the formal power series (see for ex. \cite{coddington1955theory}, Chapter 5)
$$
	y(x) = \sum_{n=1}^{\infty} c_n x^n. 	
$$
Further, classical results state that the above series is in general convergent to the actual solution but only in the asymptotic sense as $x\rightarrow 0^+$. On the other hand, the absolute convergence of the above can also be established but only in some particular cases (for the seminal papers see \cite{turrittin1955convergent,harris1969holomorphic}). A series of interesting alternative theorems has also been established \cite{grimm1975alternative}.

First, we will show that there exists a point $A^*$ such that the solution of (\ref{eq:main3}) satisfies $y(A^*) = -1$. To start, note that putting $x = 1$ in (\ref{eq:main3}) will yield a contradiction unless $y$ blows up according to
$$
	y(x) \sim -\frac{b}{1-x} \quad \text{as} \quad x\rightarrow 1^-,
$$
for some constant $b$ that can be found by plugging the above \textit{ansatz} into (\ref{eq:main3}). By a straightforward calculation it can be found as
$$
	b = \frac{2 c \left(\gamma ^2-1\right)}{\beta_0 ^2 \gamma ^2 \sigma ^2-\beta_0 ^2 \sigma ^2+2 \gamma ^2 \lambda +2 \gamma ^2 \mui -2 \gamma ^2 \muo+2 \gamma  \muo-2 \lambda -2 \mui+4 \muo-4 \gamma \muo q_2},
$$
which is positive by the assumption. We see that $y(x) \rightarrow -\infty$ as $x\rightarrow 1^-$. By continuity, there exists a point $A^*$ with the sought property. Due to the monotonicity of $y$ (concavity of $f$, Lemma \ref{lem:concave}) this $A^*$ is unique.

We have shown that $f=y'$ satisfies the smooth fit condition (\ref{smooth_fit}). The continuous fit (\ref{cont_fit}) can be established as follows. First, by integration we have
$$
	f(x) = A + \int_0^x y(z) \d z.
$$
Hence, in order to satisfy the continuous fit we must impose
$$
	A = 1 - A^* - \int_0^{A^*} y(z) \d z.
$$
The last step is to ascertain whether the constant $A$ is well-defined, i.e. $0\leq A \leq 1$. Of course, $A$ cannot be negative since then by the monotonicity of $f$ we would have $f(x) < 0 $ for all $x\in[0,1]$. Moreover, it cannot be greater than $1$ since by the assumption the line $1 - x$ is tangent to $f$ at $A^*$. Because $f$ is concave its graph must lie below every tangent. Hence $A \leq 1$.

We have thus proved that there exists a unique function which is a solution of (\ref{eq:main2}) and satisfies (\ref{cont_fit})-(\ref{norm_ent}). Hence, the optimal value function $V^*(x)$ can be calculated by the formula
$$
	V^*(x) = 1 - A^* - \int_x^{A^*} y(z) \d z,
$$
where $A^*$ is such that $y(A^*) = -1$.

Finally, we will simplify the form of the solution by the reduction of the singular point at $x = 1$. The main reason of the following transformation is to facilitate the numerical procedure by avoiding resolving the logarithmic blow up. To this end substitute
$$
	u(x) = (1-x) y(x).
$$
If we write (\ref{eq:main3}) compactly as
$$
	\sum_{k=0}^{3} (1-x)^{k+1} a_k(x) y^{(k)}(x) = c(\gamma^2-1)x,
$$
then it will be equivalent to
\eqn{
	\sum_{k=0}^{3} (1-x)^{k+1} b_k(x) u^{(k)}(x) = c(\gamma^2-1)x(1-x),
\label{eq:main3u}
}
where
$$
	b_k(x) = \frac{1}{k!}\sum_{i=k}^{3} i! \; a_i(x).
$$
Notice that when defining $p_k(x)$ we have explicitly factored the polynomial $(1-x)^{k+1}$. The above formulas can be verified by a direct calculation and the fact that
$$
	y^{(i)}(x) = \frac{\d}{\d x^i}	\left(\frac{u(x)}{1-x}\right) = \sum_{k=0}^i \binom{i}{k} \frac{(i-k)!}{(1-x)^{i-k+1}} u^{(k)}(x).
$$
We can see that both the left- and right-hand sides of (\ref{eq:main3u}) vanish for $x=1$ and hence $u(x)$ is finite and has a convergent Taylor expansion at $x=1$.

Now, in order to actually solve (\ref{eq:main3u}) we have to impose the initial conditions for $u(x)$. From the normal entrance condition (\ref{norm_ent}) we obviously have $u(0) = 0$. The values $u'(0)$ and $u''(0)$ can be found by substitution of $u(x) = \alpha_1 x + \alpha_2 x^2 + ...$ into (\ref{eq:main3u}) and comparing the terms with respective powers of $x$. By tedious algebra we can find that (\ref{eq:initial}) holds.
\exit

\section*{References}

\bibliography{detection_bib}

\end{document}